\documentclass{amsart}

\usepackage{amssymb}
\usepackage{amsmath}
\usepackage{graphicx}


%
%

\newcommand{\erase}[1]{}

\newtheorem{theorem}{Theorem}[section]
\newtheorem{lemma}[theorem]{Lemma}
\newtheorem{proposition}[theorem]{Proposition}
\newtheorem{corollary}[theorem]{Corollary}

\newtheorem{_definition}[theorem]{Definition}
\newenvironment{definition}{\begin{_definition}\rm}{\end{_definition}}

\newtheorem{_remark}[theorem]{\it Remark}
\newenvironment{remark}{\begin{_remark}\rm}{\end{_remark}}

\numberwithin{equation}{section}
\numberwithin{table}{section}
\numberwithin{figure}{section}


\newcommand{\F}{\mathord{\mathbb F}}
\renewcommand{\P}{\mathord{\mathbb  P}}


\newcommand{\maprightsp}[1]{\; \smash{\mathop{\; \longrightarrow \; }\limits\sp{#1}}\; }

\newcommand{\mapdown}{\phantom{\Big\downarrow}\hskip -8pt \downarrow}
\newcommand{\mapdownright}[1]{\mapdown\rlap{$\vcenter{\hbox{$\scriptstyle#1$}}$}}
\newcommand{\mapdownleft}[1]{\llap{$\vcenter{\hbox{$\scriptstyle#1$}}$}%
\mapdown}
\newcommand{\mapdownsurj}{
\hbox{$\bigm\downarrow$}
\llap{\hbox{\raise 2pt\hbox{$\bigm\downarrow$}}}%
\vstrechmapdown
}

\newcommand{\inj}{\hookrightarrow}


\newcommand{\set}[2]{\{\; {#1} \; \mid \; {#2} \;  \}}

\newcommand{\sprime}{\sp\prime}
\newcommand{\spar}[1]{\sp{(#1)}}

\newcommand{\dual}{\sp{\vee}}
\newcommand{\inv}{\sp{-1}}

\newcommand{\PGL}{\mathord{\mathrm {PGL}}}

\newcommand{\Aut}{\operatorname{\mathrm {Aut}}\nolimits}

\newcommand{\Sing}{\operatorname{\mathrm {Sing}}\nolimits}
\newcommand{\Pic}{\operatorname{\mathrm {Pic}}\nolimits}
\newcommand{\NS}{\operatorname{\mathrm {NS}}\nolimits}

\newcommand{\rmand}{\textrm{and}}
\newcommand{\quand}{\quad\rmand\quad}

\newcommand{\Fq}{\F_{q}}
\newcommand{\Fqq}{\F_{q^2}}
\newcommand{\tilS}{\tilde{S}}
\newcommand{\tilM}{\tilde{M}}
\newcommand{\tileta}{\tilde{\eta}}
\newcommand{\tilSigma}{\tilde{\Sigma}}
\newcommand{\tilB}{\tilde{B}}
\newcommand{\tilz}{\tilde{z}}

\newcommand{\clM}{\overline{M}}

\newcommand{\intnumb}[1]{\langle #1 \rangle}

%
%


\begin{document}

\title[Ballico-Hefez  curves]
{On Ballico-Hefez  curves and associated supersingular surfaces}

\author{Hoang Thanh Hoai}
\address{
Department of Mathematics, 
Graduate School of Science, 
Hiroshima University,
1-3-1 Kagamiyama, 
Higashi-Hiroshima, 
739-8526 JAPAN
}
\email{hoangthanh2127@yahoo.com}

\author{Ichiro Shimada}
\address{
Department of Mathematics, 
Graduate School of Science, 
Hiroshima University,
1-3-1 Kagamiyama, 
Higashi-Hiroshima, 
739-8526 JAPAN
}
\email{shimada@math.sci.hiroshima-u.ac.jp}

\begin{abstract}
Let $p$ be a prime integer, and $q$ a power of $p$.
The Ballico-Hefez  curve is a  non-reflexive nodal rational plane curve of degree $q+1$
in characteristic $p$.
We investigate its automorphism group and  defining equation.
We also prove that the surface obtained as 
the cyclic cover of the projective plane branched along the Ballico-Hefez  curve
is unirational, and hence is supersingular.
As an application,
we obtain  a new projective model of the supersingular $K3$ surface 
with Artin invariant $1$ in characteristic $3$ and $5$.
\end{abstract}

\thanks{Partially supported by
JSPS Grant-in-Aid for Challenging Exploratory Research No.23654012
and 
JSPS Grants-in-Aid for Scientific Research (C) No.25400042 
}

\keywords{plane curve, positive characteristic, supersingularity, K3 surface}

\subjclass[2000]{primary 14H45, secondary 14J25, 14J28}

\maketitle

\section{Introduction}\label{sec:Intro}
We work over an algebraically closed field $k$  of positive characteristic $p>0$.
Let $q=p^{\nu}$ be a power of $p$.
\par
\medskip
In positive characteristics,
algebraic varieties often possess interesting properties
that are not observed in characteristic zero.
%
%
One of those properties is the failure of reflexivity.
In~\cite{MR1092144},
Ballico and Hefez classified irreducible plane curves $X$ of degree $q+1$ 
such that the natural morphism from the  conormal variety $C(X)$ of $X$
to the dual curve $X\dual$ has inseparable degree $q$.
The Ballico-Hefez  curve in the title of this note is one of
the curves that appear in their classification.
It is  defined in Fukasawa, Homma and Kim~\cite{MR2961398} as follows.
\begin{definition}
The \emph{Ballico-Hefez  curve}
is   the image 
of the morphism
$\phi: \P^1\to \P^2$ defined  by
$$
[s:t]\mapsto [s^{q+1}: t^{q+1}: s t^q + s^q t].
$$
\end{definition}
\begin{theorem}[Ballico and Hefez~\cite{MR1092144}, Fukasawa, Homma and Kim~\cite{MR2961398}]
{\rm (1)} Let $B$ be the Ballico-Hefez  curve.
Then $B$ is a  curve of degree $q+1$ with $(q^2-q)/2$ ordinary nodes,  the dual curve $B\dual$ is of degree $2$, and
the natural morphism $C(B)\to B\dual$ has inseparable degree $q$.
\par
{\rm (2)} 
Let $X\subset \P^2$ be an irreducible  singular curve of degree $q+1$
such that the dual curve $X\dual$ is of degree $>1$ and 
the natural morphism $C(X)\to X\dual$ has inseparable degree $q$.
Then $X$ is projectively isomorphic to 
the Ballico-Hefez curve.
\end{theorem}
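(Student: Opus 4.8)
The plan is to prove (1) directly from the given parametrization and to deduce (2) by reconstructing $X$ from its dual conic. For (1) I first note that $s^{q+1}, t^{q+1}, st^q+s^qt$ have no common zero, so $\phi$ is a morphism with $\deg\phi^*\mathcal O(1)=q+1$. To find its fibres I work in the chart $t=1$: writing $s_2=s_1+w$ and using $(s_1-s_2)^q=s_1^q-s_2^q$ in characteristic $p$, the conditions $s_1^{q+1}=s_2^{q+1}$ and $s_1+s_1^q=s_2+s_2^q$ reduce to $w(1+w^{q-1})=0$ together with $s_1^q-s_1=w$, which forces $s_1^{q^2}=s_1$. Hence $\phi(s_1)=\phi(s_2)$ with $s_1\neq s_2$ occurs exactly for the pairs $\{s,s^q\}$ with $s\in\Fqq\setminus\Fq$, of which there are $(q^2-q)/2$; a general point has a single preimage, and $\phi$ is separable (its derivative $(s^q,0,1)$ is nonzero), so $\phi$ is birational onto $B$ and $\deg B=q+1$.

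Computing the Gauss map from the minors of $\left(\begin{smallmatrix}f_0&f_1&f_2\\ \partial_s f_0&\partial_s f_1&\partial_s f_2\end{smallmatrix}\right)$ and using $(q+1)\equiv1$, $q\equiv0\pmod p$, the tangent line at $[s:t]$ comes out as $[t^{2q}:s^{2q}:-s^qt^q]=[(t^2)^q:(s^2)^q:(-st)^q]$. Thus $\gamma$ is the $q$-power Frobenius followed by the conic parametrization $[s:t]\mapsto[t^2:s^2:-st]$, so $B\dual$ is the smooth conic $\{\xi_2^2=\xi_0\xi_1\}$ of degree $2$ and $C(B)\to B\dual$ has inseparable degree $q$. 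At the node carried by $s$ and $s^q$ the two branch tangents are this line evaluated at those two parameters, and they differ because $s\neq s^q$, so the point is an ordinary node. Since a plane curve of degree $q+1$ has arithmetic genus $(q^2-q)/2$ while $B$ is rational, the total delta invariant is $(q^2-q)/2$, which is used up exactly by these nodes; hence $B$ has no further singularities, proving (1).

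For (2) let $\nu\colon C\to X$ be the normalization, $C$ smooth of genus $g$ and $L=\nu^*\mathcal O(1)$ of degree $q+1$. The Gauss map is given by the Wronskian minors $w_{ij}=f_if_j'-f_jf_i'$, so $M:=\gamma^*\mathcal O(1)=L^{\otimes2}\otimes K_C(-D)$ for the common divisor $D$ of the $w_{ij}$, and $\deg M=2(q+1)+(2g-2)-\deg D$. Since $C(X)\to X\dual$ has inseparable degree $q$, I factor $\gamma=\bar\gamma\circ F_q$ with $F_q\colon C\to C^{(q)}$ the $q$-power Frobenius and $\bar\gamma$ separable; with $\deg_s\gamma=\sigma$ and $\deg X\dual=m\geq2$ this gives $\deg M=q\sigma m$, while $\bar\gamma$ descends to $M'$ on $C^{(q)}$ with $F_q^*M'=M$, $\deg M'=\sigma m$ and $h^0(M')\geq3$ (as $X\dual$ is nondegenerate). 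The decisive step is numerical: combining $q\sigma m=2q+2g-\deg D$ with Clifford's theorem or Riemann--Roch for $M'$ forces $g=0$, whence $\sigma m=2$, $\deg D=0$, $\sigma=1$, $m=2$, so that $X\dual$ is a smooth conic. I expect ruling out the positive-genus numerical solutions to be the main obstacle, since these sporadic cases must be excluded using that $X$ is irreducible and singular.

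Once $g=0$ I choose coordinates so that $C=\P^1$, $X\dual=\{\xi_2^2=\xi_0\xi_1\}$ and $\gamma[s:t]=[t^{2q}:s^{2q}:-s^qt^q]$. The requirement that this be the tangent line at $\nu[s:t]=[f_0:f_1:f_2]$ is exactly the incidence relation $t^{2q}f_0+s^{2q}f_1-s^qt^qf_2=0$, because differentiating it in $s$ or in $t$ kills the $q$-power coefficients and reproduces the needed derivative relation automatically. This forces $s^q\mid f_0$ and $t^q\mid f_1$; writing $f_0=s^q(as+bt)$ and $f_1=t^q(cs+dt)$ and solving for $f_2$ yields a four-parameter family of degree-$(q+1)$ parametrizations, with $(a,b,c,d)=(1,0,0,1)$ returning $\phi$ itself. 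A projective change of coordinates on $\P^2$ together with a reparametrization of $\P^1$ then normalizes any such birational parametrization to $\phi$, so $X$ is projectively isomorphic to $B$, proving (2).
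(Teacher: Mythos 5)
A preliminary remark: this theorem is a cited result (Ballico--Hefez, Fukasawa--Homma--Kim); the paper itself gives no proof and Section~2 merely recalls the relevant facts, so there is nothing in the source to compare your argument against, and I can only judge it on its own terms. Your part (1) is correct and complete in outline: the fibre computation showing that $\phi$ identifies exactly the pairs $\{s,s^{q}\}$ with $s\in\Fqq\setminus\Fq$, the Wronskian computation giving the tangent line $[t^{2q}:s^{2q}:-s^{q}t^{q}]$ and hence the factorization of the Gauss map through Frobenius followed by the Veronese conic, the distinctness of the two branch tangents at each double point, and the $\delta$-invariant count that excludes further singularities all check out and agree with the facts recalled in Section~2.

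Part (2), however, has a genuine gap exactly where you flag it: the claim that combining $q\sigma m=2q+2g-\deg D$ with Clifford or Riemann--Roch ``forces $g=0$'' is not true as stated. For example, $\sigma m=4$, $\deg D=0$, $g=q$ satisfies your degree identity, and Clifford's bound for the special line bundle $M'$ of degree $4$ gives $h^{0}(M')\le 3$, which is consistent with $h^{0}(M')=3$ (the equality case of Clifford, i.e.\ $C$ hyperelliptic and $M'$ twice the $g^{1}_{2}$); nothing you have written excludes this or the other positive-genus solutions. Ruling these out is the actual content of the Ballico--Hefez classification and requires input beyond the numerology --- e.g.\ biduality $X^{\vee\vee}=X$ together with the degree formula run in the opposite direction, or an analysis of the order sequence of the Gauss map --- so as it stands the reduction to $g=0$, $\sigma=1$, $m=2$ is missing, not merely deferred. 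A secondary, smaller gap: in the last step, after normalizing the Gauss map you obtain the family $f_{0}=s^{q}(as+bt)$, $f_{1}=t^{q}(cs+dt)$, and you assert that a residual coordinate change reduces $(a,b,c,d)$ to $(1,0,0,1)$. The residual symmetry group is only the copy of $\PGL_2$ stabilizing the fixed dual conic and its Frobenius--Veronese parametrization, acting on the matrix with entries $a,b,c,d$ essentially by $N\mapsto h^{(q)}Nh^{-1}$; transitivity on the nondegenerate $N$ is a Lang-type surjectivity statement that must be proved, and the degenerate $N$ (which produce maps of degree $<q+1$ or non-birational parametrizations) must be explicitly excluded. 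Both points are fixable, but neither is automatic.
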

Recently,
 geometry and arithmetic of the Ballico-Hefez  curve 
have been investigated by
Fukasawa, Homma and Kim~\cite{MR2961398} and Fukasawa~\cite{Fukasawa2}
from various points of view,
including coding theory and Galois points.
As is pointed out in~\cite{MR2961398},
the Ballico-Hefez curve has many properties in common with the Hermitian curve;
that is, the Fermat curve 
of degree $q+1$,
which also appears in the classification of Ballico and Hefez~\cite{MR1092144}.
In fact, we can easily see that the image of the line 
$$
x_0+x_1+x_2=0
$$
in $\P^2$ by the morphism $\P^2\to \P^2$ given by
$$
[x_0: x_1: x_2]\mapsto [x_0^{q+1}: x_1^{q+1}: x_2^{q+1}]
$$
is projectively isomorphic to the Ballico-Hefez  curve.
Hence,
up to  linear transformation of coordinates,  
the Ballico-Hefez  curve is  defined  by an equation
$$
x_0^\frac{1}{q+1}+ x_1^\frac{1}{q+1}+x_2^\frac{1}{q+1}=0
$$
in the style of ``Coxeter curves"~(see Griffith~\cite{MR717305}). 
%
\par
\medskip
In this note, we prove the the following:
%
\begin{proposition}\label{prop:aut}
Let $B$ be the Ballico-Hefez  curve.
Then  the group 
$$
\Aut(B):=\set{g\in \PGL_3(k)}{g(B)=B}
$$
 of  
projective automorphisms  of $B\subset \P^2$
is isomorphic to 
$\PGL_2(\Fq)$.
\end{proposition}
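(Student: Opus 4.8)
The plan is to realize $B$ as the locus of Frobenius-conjugate pairs inside the space of unordered pairs of points of $\P^1$, and then to show that a projective automorphism of $B$ is forced to be ``defined over $\Fq$''. Write $\Phi\colon\P^1\to\P^1$ for the $q$-th power Frobenius $[s:t]\mapsto[s^q:t^q]$, and identify $\P^2$ with the space $\mathrm{Sym}^2\P^1$ of unordered pairs of points of $\P^1$ (binary quadratic forms up to scalar) by a fixed linear change of coordinates, so that $\phi(P)=\{P,\Phi(P)\}$. Under this identification $\PGL_2(k)=\Aut(\P^1)$ acts on $\P^2$ through the symmetric-square homomorphism $\mathrm{Sym}^2\colon\PGL_2(k)\inj\PGL_3(k)$, preserving the smooth conic $\Delta=\set{\{R,R\}}{R\in\P^1}$. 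First I would record the fibre structure of the normalization $\phi$: since $\Phi$ is injective, $\phi$ is injective away from $N:=\P^1(\Fqq)\setminus\P^1(\Fq)$, on which it glues $P$ to $\Phi(P)$; the $(q^2-q)/2$ nodes are the images of the $\Phi$-orbits in $N$, while the $q+1$ points $\phi(\P^1(\Fq))$ are exactly $B\cap\Delta$.

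Next I would construct two mutually inverse maps. For the inclusion, set $\iota(h)=\mathrm{Sym}^2(h)$ for $h\in\PGL_2(\Fq)$: since $h$ has coefficients in $\Fq$ it commutes with $\Phi$, so $\iota(h)\{P,\Phi(P)\}=\{hP,\Phi(hP)\}\in B$, giving a homomorphism $\iota\colon\PGL_2(\Fq)\to\Aut(B)$ with $\iota(h)\circ\phi=\phi\circ h$. (Concretely one checks on the standard generators of $\PGL_2(\Fq)$ that the pullback preserves the three-dimensional system $\langle s^{q+1},t^{q+1},st^q+s^qt\rangle$; this is precisely where the hypothesis $h\in\PGL_2(\Fq)$, rather than $\PGL_2(k)$, enters.) Conversely, any $g\in\Aut(B)\subset\PGL_3(k)$ restricts to an automorphism of $B$ that lifts uniquely through the normalization to $\tilde g:=r(g)\in\PGL_2(k)$ with $\phi\circ\tilde g=g\circ\phi$; this $r$ is a homomorphism, and it is injective because $B$ spans $\P^2$, so $\tilde g=\id$ forces $g$ to fix $B$ pointwise and hence $g=\id$. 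By construction $r\circ\iota=\id$.

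The crux is to prove $r(\Aut(B))\subseteq\PGL_2(\Fq)$, i.e.\ that $\tilde g$ commutes with $\Phi$. Because $g$ descends from $\tilde g$, the lift must respect the gluing: for $P\in N$ we have $\phi(P)=\phi(\Phi(P))$, hence $\phi(\tilde gP)=\phi(\tilde g\,\Phi(P))$, and injectivity of $\tilde g$ forces $\tilde g\,\Phi(P)=\Phi(\tilde gP)$. The key simplification is that $\Phi\circ\tilde g=\tilde g^{(q)}\circ\Phi$, where $\tilde g^{(q)}$ is the Frobenius twist of the matrix of $\tilde g$; substituting and cancelling the bijection $\Phi$ turns the relation into $\tilde g=\tilde g^{(q)}$ on the set $N$. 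For $q\ge 3$ the set $N$ has at least three distinct points, and two elements of $\PGL_2(k)$ agreeing on three points coincide, so $\tilde g=\tilde g^{(q)}$, i.e.\ $\tilde g\in\PGL_2(\Fq)$. To cover $q=2$ as well, I would instead use the $q+1\ge 3$ points $\phi(\P^1(\Fq))$: the conic $\Delta$ is intrinsic to $B$, since the first-order motion of $\{P,\Phi(P)\}$ leaves $\Phi(P)$ fixed ($\Phi$ being purely inseparable), whence every tangent line of $B$ is tangent to $\Delta$ and so $\Delta=(B\dual)\dual$ is recovered from the conic $B\dual$ of the preceding theorem. Then $\Aut(B)$ preserves $B\cap\Delta=\phi(\P^1(\Fq))$, giving $\tilde g(\P^1(\Fq))=\P^1(\Fq)$ and hence $\tilde g=\tilde g^{(q)}$ on the $q+1\ge 3$ points of $\P^1(\Fq)$; this last argument in fact works uniformly for all $q$.

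Finally I would assemble the pieces: $r$ is an injective homomorphism with $r(\Aut(B))\subseteq\PGL_2(\Fq)$ and $r\circ\iota=\id$, so for every $g\in\Aut(B)$ we get $\iota(r(g))=g$ by injectivity of $r$; thus $\iota$ is a bijection and $\Aut(B)\cong\PGL_2(\Fq)$. The main obstacle is the crux step — showing that an a priori arbitrary linear automorphism of $B$ must be defined over $\Fq$ — together with the small-$q$ bookkeeping, which is why I would isolate the intrinsic conic $\Delta$ so as to obtain a single uniform argument.
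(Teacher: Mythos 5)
Your overall architecture matches the paper's: the inclusion $\PGL_2(\Fq)\to\Aut(B)$ via the symmetric square is exactly the paper's explicit matrix $\tilde g$, and the reverse containment is obtained by transporting $\Aut(B)$ to $\PGL_2(k)$ through the normalization and pinning the lift down on a distinguished finite subset of $\P^1$. Your node-based argument (forcing $\tilde g$ and its Frobenius twist $\tilde g^{(q)}$ to agree on $N=\P^1(\Fqq)\setminus\P^1(\Fq)$) is a genuinely different, and for $q\ge 3$ perfectly valid, way to carry out that second step; the paper instead uses the projective invariance of the set of smooth inflection points $\phi(\P^1(\Fq))$.

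The gap is precisely in the case $q=2$, which is the one case your ``uniform'' argument is supposed to rescue. In characteristic $2$ the diagonal $\Delta\subset\mathrm{Sym}^2\P^1$ is not a smooth conic: the discriminant $x_2^2-4x_0x_1$ degenerates to $x_2^2$, so $\Delta$ is set-theoretically the line $x_2=0$ (the image of $[s:t]\mapsto[s^2:t^2:2st]$). Consequently ``every tangent line of $B$ is tangent to $\Delta$'' is no longer meaningful, and the biduality $(B\dual)\dual=\Delta$ you invoke fails: in characteristic $2$ the Gauss map of the smooth conic $B\dual$ is inseparable with image a line, so duality of conics is not involutive. Since your first argument needs $|N|=q^2-q\ge 3$, i.e.\ $q\ge 3$, the case $q=2$ is left unproved. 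The repair is the paper's: $\phi(\P^1(\Fq))$ is exactly the set of smooth points of $B$ at which the tangent line meets $B$ with multiplicity $q+1$ (the smooth inflection points, cf.\ Section 2), a characterization that is projectively invariant in every characteristic and immediately gives $\tilde g(\P^1(\Fq))=\P^1(\Fq)$, hence $\tilde g\in\PGL_2(\Fq)$, for all $q$ at once.
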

\begin{proposition}\label{prop:defeq}
The Ballico-Hefez  curve  is defined by the following equations:
\begin{itemize}
\item When $p=2$, 
$$
x_0^q x_1+x_0 x_1^q +x_2^{q+1}+\sum_{i=0}^{\nu-1} x_0^{2^{i}} x_1^{2^i} x_2^{q+1-2^{i+1}} =0,
\quad\textrm{where $q=2^\nu$}.
$$
\item When $p$ is odd,
$$
2(x_0^q x_1+x_0 x_1^q)-x_2^{q+1}-(x_2^2-4 x_1 x_0)^{\frac{q+1}{2}}=0. 
$$
\end{itemize}
\end{proposition}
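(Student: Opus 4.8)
The plan is to treat both displayed equations uniformly. Write $F(x_0,x_1,x_2)$ for the left-hand side in each case, check that $F$ is a nonzero homogeneous form of degree $q+1$, verify that $F$ vanishes identically along the parametrization $\phi$, and then deduce that $F$ is a defining equation. The last deduction is formal: since $B=\phi(\P^1)$ is the image of an irreducible variety it is irreducible, and it has degree $q+1$ by part~(1) of the theorem of Ballico and Hefez quoted above, so its homogeneous ideal in $k[x_0,x_1,x_2]$ is generated by a single irreducible form $G$ of degree $q+1$; if $F$ vanishes on $B$ then $G\mid F$, and comparing degrees gives $F=cG$ with $c\in k^{\times}$. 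That $F$ is nonzero of degree exactly $q+1$ is seen at once from the coefficient of $x_2^{q+1}$, which is $-2$ when $p$ is odd and $1$ when $p=2$. So the whole content is the vanishing $F(\phi([s:t]))=0$, which I would establish by substituting $x_0=s^{q+1}$, $x_1=t^{q+1}$, $x_2=s t^q+s^q t$ and simplifying.

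When $p$ is odd, the point is that the last term is a genuine polynomial because the radicand is a perfect square: $x_2^2-4x_1x_0=(s t^q-s^q t)^2$, whence $(x_2^2-4x_1x_0)^{(q+1)/2}=(s t^q-s^q t)^{q+1}$. Using the Frobenius identities $(s t^q\pm s^q t)^q=s^q t^{q^2}\pm s^{q^2} t^q$ (valid with the signs because $q$ is odd), I would expand $x_2^{q+1}=(s t^q+s^q t)(s^q t^{q^2}+s^{q^2}t^q)$ and $(s t^q-s^q t)^{q+1}=(s t^q-s^q t)(s^q t^{q^2}-s^{q^2}t^q)$ and add them; the sign-odd cross terms cancel and the sign-even terms double, giving $x_2^{q+1}+(x_2^2-4x_1x_0)^{(q+1)/2}=2(s^{q^2+q}t^{q+1}+s^{q+1}t^{q^2+q})=2(x_0^q x_1+x_0 x_1^q)$. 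This is exactly the assertion $F\circ\phi=0$.

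When $p=2$ the square-root shortcut is unavailable, since the radicand degenerates to $x_2^2$. Here, after the cancellations forced by characteristic $2$, the same substitution yields $x_0^q x_1+x_0 x_1^q+x_2^{q+1}=s^{1+q^2}t^{2q}+s^{2q}t^{1+q^2}$, so it remains to show that the sum contributes the identical quantity, i.e.\ $\sum_{i=0}^{\nu-1}(x_0x_1)^{2^i}x_2^{q+1-2^{i+1}}=s^{1+q^2}t^{2q}+s^{2q}t^{1+q^2}$. Putting $a=s t^q$ and $b=s^q t$, so that $x_0x_1=ab$ and $x_2=a+b$, the right-hand side is $a^q b+a b^q$, and the claim becomes the two-variable identity
\[
P_\nu:=\sum_{i=0}^{\nu-1}(ab)^{2^i}(a+b)^{2^\nu+1-2^{i+1}}=a^{2^\nu}b+a b^{2^\nu}\qquad\text{in }\F_2[a,b].
\]
I would prove $P_\nu$ by induction on $\nu$. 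The base $\nu=1$ is $ab(a+b)=a^2b+ab^2$. For the step I would separate the top summand $i=\nu$, which equals $(ab)^{2^\nu}(a+b)$, and factor $(a+b)^{2^\nu}=a^{2^\nu}+b^{2^\nu}$ out of the remaining summands to recognize $(a^{2^\nu}+b^{2^\nu})P_\nu$; thus $P_{\nu+1}=(ab)^{2^\nu}(a+b)+(a^{2^\nu}+b^{2^\nu})(a^{2^\nu}b+ab^{2^\nu})$ by the inductive hypothesis, and expanding and cancelling in characteristic $2$ leaves $a^{2^{\nu+1}}b+a b^{2^{\nu+1}}$, as required.

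The main obstacle is precisely the identity $P_\nu$: the clean Frobenius expansion that disposes of the odd case has no analogue here, and — as one sees already for $q=8$ — the summands do not telescope against the natural symmetric combinations $a^{2^i}b^{q+1-2^i}+a^{q+1-2^i}b^{2^i}$, so the recursion $P_{\nu+1}=(ab)^{2^\nu}(a+b)+(a^{2^\nu}+b^{2^\nu})P_\nu$, obtained by peeling off the last term and applying Frobenius to the rest, is the decisive step. The remaining ingredients — the degree and nonvanishing of $F$, the odd-case computation, and the reduction from vanishing on $B$ to defining $B$ — are routine.
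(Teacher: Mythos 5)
Your proof is correct, but it reaches the key identities by a different route than the paper, and the comparison is worth recording. For $p$ odd, the paper sets $S_1=2x+2x^q-y^{q+1}$ and $S_2=(y^2-4x)^{(q+1)/2}$ evaluated along the parametrization, checks that $S_1^2=S_2^2$ by expanding both sides into an explicit polynomial in $t$, and then resolves the resulting sign ambiguity $S_1=\pm S_2$ by comparing top-degree coefficients; you instead observe that $x_2^2-4x_0x_1=(st^q-s^qt)^2$ on the image of $\phi$ and compute $(st^q\pm s^qt)^{q+1}$ directly via Frobenius, which gives the identity in one pass with no sign ambiguity to dispose of --- a genuinely cleaner argument. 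For $p=2$, the paper's proof is an Artin--Schreier argument: it exhibits $\sum_{i=0}^{\nu-1}(x/y^2)^{2^i}$ and $(x+x^q+y^{q+1})/y^{q+1}$ as two roots of the same equation $s^2+s=b$ over $k(t)$ and distinguishes them from the pair $\{S,S+1\}$ by checking both vanish at $t=\infty$; your proof replaces this with the purely polynomial identity
$$
P_\nu:=\sum_{i=0}^{\nu-1}(ab)^{2^i}(a+b)^{2^\nu+1-2^{i+1}}=a^{2^\nu}b+ab^{2^\nu}
$$
in $\F_2[a,b]$ with $a=st^q$, $b=s^qt$, proved by induction on $\nu$ via the recursion $P_{\nu+1}=(ab)^{2^\nu}(a+b)+(a^{2^\nu}+b^{2^\nu})P_\nu$. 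I checked the base case, the recursion, and the cancellation in the inductive step; all are correct, and the underlying telescoping is of course the same mechanism that makes the paper's Artin--Schreier relation work, but your version avoids both the passage to the function field and the separate valuation argument at infinity needed to pick the right root. Your reduction from ``$F$ vanishes on $B$'' to ``$F$ defines $B$'' (irreducibility of $B$, degree count, nonvanishing of the coefficient of $x_2^{q+1}$) is the same step the paper compresses into one sentence, and is carried out correctly.
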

\begin{remark}
In fact, the defining equation for $p=2$  has been obtained by Fukasawa
in an apparently different form
(see Remark 3 of~\cite{Fukasawa1}).
\end{remark}
Another property of algebraic varieties peculiar to positive characteristics 
is the failure of L\"uroth's theorem for surfaces;
a non-rational surface can be unirational in positive characteristics.
A famous example of this phenomenon is the Fermat surface  of degree $q+1$.
Shioda~\cite{MR0374149} and Shioda-Katsura~\cite{MR526513} showed that
the Fermat surface $F$ of degree $q+1$ is unirational (see also~\cite{MR1176080} for another proof).
This  surface $F$ is 
obtained as the cyclic cover of $\P^2$ with degree $q+1$ branched along
the Fermat curve  of degree $q+1$,
and hence, for any divisor $d$ of $q+1$,
the cyclic cover of $\P^2$ with degree $d$ branched along 
the Fermat curve  of degree $q+1$ is also unirational.
\par
\medskip
We  prove an analogue of this result for the Ballico-Hefez curve.
Let $d$ be a divisor of $q+1$ larger than $1$.
Note that $d$ is prime to $p$.
\begin{proposition}\label{prop:unirational}
Let $\gamma: S_d\to \P^2$ be 
the cyclic covering of $\P^2$ with degree $d$ branched along 
the Ballico-Hefez curve.
Then there exists a dominant rational map
$\P^2\cdots \to S_d$ of degree $2q$ with inseparable degree $q$.
\end{proposition}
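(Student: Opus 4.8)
The plan is to dominate $S_d$ by a rational surface assembled from the one-parameter family of tangent lines of $B$, using the non-reflexivity recorded in part~(1) of the theorem quoted above as the source of the inseparable factor $q$.

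First I would compute the tangent line to $B$ at a general point $\phi([s:t])$. Differentiating $\phi$ and using $q\equiv 0$ and $q+1\equiv 1 \pmod p$ gives $\partial_s\phi=(s^q,0,t^q)$ and $\partial_t\phi=(0,t^q,s^q)$, whence the tangent line is
$$
T_{[s:t]}:\quad t^{2q}x_0+s^{2q}x_1-s^qt^qx_2=0.
$$
The key point is that $T_{[s:t]}$ depends only on $[s^q:t^q]$, so the Gauss map $[s:t]\mapsto[t^{2q}:s^{2q}:-s^qt^q]$ factors through the $q$-power Frobenius and has image the conic $x_0x_1=x_2^2$; this is exactly the inseparable degree $q$ and the equality $\deg B\dual=2$ of the theorem. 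Restricting the defining polynomial $G$ of $B$ to $T_{[s:t]}$ and pulling back by $\phi$, I expect the factorization
$$
\phi^*(G|_{T_{[s:t]}})\;\sim\;(t^qs'-s^qt')\,(ts'-st')^q
$$
as a form in $[s':t']$, using $(ts')^q-(st')^q=(ts'-st')^q$. Thus $T_{[s:t]}$ meets $B$ at the tangency point $\phi([s:t])$ with multiplicity $q$ and at the single further point $\phi([s^q:t^q])$; equivalently, on $T_{[s:t]}\cong\P^1$ the section $G|_{T_{[s:t]}}$ has divisor $q\,[P_1]+[P_2]$ with $P_1=\phi([s:t])$ and $P_2=\phi([s^q:t^q])$.

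Next I would form the incidence surface
$$
\mathcal J=\set{([s:t],P)\in\P^1\times\P^2}{P\in T_{[s:t]}},
$$
a $\P^1$-bundle over $\P^1$, hence rational. Its projection to $\P^2$ is dominant: through a general $P$ pass exactly two tangent lines (the conic $B\dual$ meets the dual line $P\dual$ in two points), and each tangent line is the image of $q$ values of $[s:t]$ under Frobenius, so the projection has degree $2q$, with separable degree $2$ and inseparable degree $q$. Pulling back the cover, set $\tilde{\mathcal J}=\mathcal J\times_{\P^2}S_d$. Over the base $\P^1_{[s:t]}$ the generic fiber of $\tilde{\mathcal J}$ is the cyclic cover $w^d=G|_{T_{[s:t]}}$, that is $w^d=c\,(\lambda-\lambda_1)^q(\lambda-\lambda_2)$ on $T_{[s:t]}\cong\P^1$ for some $c\in k([s:t])$, where $\lambda_1,\lambda_2$ are the coordinates of $P_1,P_2$. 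Since $d\mid q+1$ forces $\gcd(d,q)=1$, both $\lambda_1,\lambda_2$ give totally ramified points while the point at infinity, of order $q+1\equiv 0\bmod d$, is unramified; a Riemann--Hurwitz count then gives geometric genus $0$. As $P_1,P_2$ are now rational over $k([s:t])$, this genus-$0$ curve carries a rational point and is therefore $\cong\P^1_{k([s:t])}$, so $\tilde{\mathcal J}\to\P^1_{[s:t]}$ is generically a $\P^1$-bundle and $\tilde{\mathcal J}$ is rational. The projection $\tilde{\mathcal J}\to S_d$, being the base change of $\mathcal J\to\P^2$, is then dominant of degree $2q$ with inseparable degree $q$, and composing with a birational $\P^2\cdots\to\tilde{\mathcal J}$ yields the asserted map.

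The hard part is this last rationality step, and it hinges on the choice of parameter. Had I parametrized the tangent lines directly by $B\dual$ (equivalently by $[s^q:t^q]$), the order-$q$ branch point $P_1$ would be defined only over the purely inseparable extension $k([s:t])=k([s^q:t^q])^{1/q}$; the generic fiber, although of geometric genus $0$, would then be regular but not smooth over the imperfect base field and would fail to be $\P^1$. This is precisely the obstruction that forbids a separable degree-$2$ parametrization, consistent with $S_d$ being of general type for large $d$. Parametrizing instead by the tangency point $[s:t]$ makes $P_1$ rational over the base and rationalizes the fibration, at the cost of the inseparable degree-$q$ base change $[s:t]\mapsto[s^q:t^q]$; this is what produces total degree $2q$ and inseparable degree exactly $q$. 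I would finally check that everything is uniform in characteristic, including $p=2$, where the two tangent lines through a general $P=[c_0:c_1:c_2]$ stay distinct because the quadratic $c_1\mu^2+c_2\mu+c_0$ in $\mu=u/v$ remains separable for general $P$.
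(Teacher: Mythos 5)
Your proof is correct and takes essentially the same route as the paper: the incidence family of tangent lines (your $\mathcal J$ is the paper's $L$, cut out by $x-t^qy+t^{2q}=0$ in $\P^1\times\P^2$), the observation that its projection to $\P^2$ has degree $2q$ with separable degree $2$ and inseparable degree $q$, and the reduction to rationality of the fiber product with $S_d$. The only divergence is in that final step, where you argue abstractly via Riemann--Hurwitz and a rational ramification point on the genus-zero generic fiber over $k(t)$, while the paper reaches the same conclusion explicitly by setting $\tilde z=z/(y-t^q-t)^{(q+1)/d}$, so that $\tilde z^{\,d}=(y-t^{q^2}-t^q)/(y-t^q-t)$ can be solved rationally for $y$ and $k(M)=k(\tilde z,t)$ is exhibited directly.
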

Note that  $S_d$ is not rational
except for the case $(d, q+1)=(3,3)$ or $ (2, 4)$.
\par
\medskip
A smooth surface $X$ is said to be \emph{supersingular} (in the sense of Shioda)
if the second $l$-adic cohomology group $H^2(X)$ of $X$ is generated by the classes of curves.
Shioda~\cite{MR0374149} proved that every smooth unirational surface is supersingular.
Hence we obtain the following:
\begin{corollary}\label{cor:supersingular}
Let $\rho : \tilS_d \to S_d$ be the minimal resolution of $S_d$.
Then the  surface $\tilS_d$  is supersingular.
\end{corollary}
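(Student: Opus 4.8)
The plan is to derive Corollary~\ref{cor:supersingular} directly from the unirationality statement in Proposition~\ref{prop:unirational} combined with Shioda's theorem~\cite{MR0374149}. First I would record that Proposition~\ref{prop:unirational} furnishes a dominant rational map $\P^2 \cdots \to S_d$; by definition this exhibits $S_d$ as a unirational surface. The degree $2q$ and inseparable degree $q$ play no role here, since unirationality only requires the existence of \emph{some} dominant rational map from a rational variety, regardless of its separability type.

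Next I would transport this unirationality to the smooth model $\tilS_d$. Since $\rho : \tilS_d \to S_d$ is the minimal resolution, it is a birational morphism, and hence its inverse $\rho\inv : S_d \cdots \to \tilS_d$ is a (dominant) rational map. Composing, I obtain a dominant rational map $\P^2 \cdots \to \tilS_d$, so $\tilS_d$ is again unirational. The essential point of this step is merely that the composite of a dominant rational map with a birational map is still dominant and rational; this is immediate and requires no computation.

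Finally I would invoke Shioda's theorem~\cite{MR0374149}, which asserts that every smooth unirational surface is supersingular in the sense that $H^2$ is spanned by the classes of algebraic curves. The only hypothesis to check is that $\tilS_d$ is a smooth projective surface, and this holds by construction: $\tilS_d$ is the minimal resolution of the projective surface $S_d$, hence smooth and projective. Applying the theorem to the unirational surface $\tilS_d$ yields the supersingularity of $\tilS_d$, completing the argument.

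In this deduction there is no genuine obstacle; the statement is a formal consequence of Proposition~\ref{prop:unirational} and the cited theorem. The only subtlety worth flagging explicitly is that supersingularity is a property of the \emph{smooth} surface $\tilS_d$ and not of the possibly singular $S_d$, which is precisely why the passage through the minimal resolution $\rho$ is needed before Shioda's theorem can be applied.
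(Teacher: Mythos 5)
Your proposal is correct and follows exactly the route the paper intends: the corollary is stated there as an immediate consequence of Proposition~\ref{prop:unirational} together with Shioda's theorem that smooth unirational surfaces are supersingular, with the passage through the minimal resolution being the only (routine) intermediate step. No further commentary is needed.
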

We present a finite set of curves on $\tilS_d$ whose classes span  $H^2(\tilS_d)$.
For a point $P$ of $\P^1$,
let $l_P\subset \P^2$ denote the line tangent at $\phi(P)\in B$
to the branch of $B$ corresponding to $P$.
It was shown in~\cite{MR2961398} that,
if $P$ is an $\Fqq$-rational point of $\P^1$,
then $l_P$ and $B$ intersect only at $\phi(P)$,
and hence the strict transform  of $l_P$ by the 
composite $\tilS_d \to S_d\to \P^2$ is a union of $d$ rational curves $l_P\spar{0}, \dots, l_P\spar{d-1}$.
\begin{proposition}\label{prop:supersingular}
The cohomology group 
 $H^2(\tilS_d)$ is generated by the classes of the following rational curves on $\tilS_d$;
the irreducible components 
of the exceptional divisor of the resolution $\rho: \tilS_d\to S_d$ and
the rational curves $l_P\spar{i}$, where $P$ runs through the set $\P^1(\Fqq)$
of $\Fqq$-rational points of $\P^1$
and $i=0, \dots, d-1$.
\end{proposition}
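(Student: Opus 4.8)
The plan is to deduce the statement from the unirationality of Proposition~\ref{prop:unirational} and the supersingularity of Corollary~\ref{cor:supersingular}, by transporting a spanning set of $H^2$ from $\P^2$ across the dominant map. Let $\Psi\colon\P^2\cdots\to\tilS_d$ be the dominant rational map obtained by composing the map of Proposition~\ref{prop:unirational} with $\rho\inv$, and eliminate its indeterminacy to produce an honest morphism $\bar\Psi\colon W\to\tilS_d$, where $\sigma\colon W\to\P^2$ is a composite of blow-ups at the base points of $\Psi$. Since $\bar\Psi$ is dominant and generically finite of degree $2q$, the projection formula $\bar\Psi_*\bar\Psi^*=2q\cdot\id$ shows that the Gysin pushforward $\bar\Psi_*\colon H^2(W,\mathbb{Q}_\ell)\to H^2(\tilS_d,\mathbb{Q}_\ell)$ is surjective; the inseparability of $\Psi$ is harmless here because we work with $\mathbb{Q}_\ell$-coefficients. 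As $H^2(W,\mathbb{Q}_\ell)$ is generated by $\sigma^*[\text{line}]$ together with the classes of the exceptional curves $E_j$ of $\sigma$, it suffices to prove that the $\bar\Psi_*$-image of each of these generators lies in the subspace $V\subset H^2(\tilS_d,\mathbb{Q}_\ell)$ spanned by the classes in the statement.

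The key tool for the image of a line is the algebraic family of tangent lines $\{l_P\}_{P\in\P^1}$. Pulling this family back through $\gamma\colon S_d\to\P^2$ and taking strict transforms in $\tilS_d$ yields an algebraic family of divisors $\{C_P\}_{P\in\P^1}$, where $C_P$ is the strict transform of $\gamma\inv(l_P)$; since the base $\P^1$ is connected, all the classes $[C_P]$ coincide in $H^2(\tilS_d,\mathbb{Q}_\ell)$. By the property recalled before the statement, for every $P\in\P^1(\Fqq)$ the line $l_P$ meets $B$ only at $\phi(P)$, so $C_P$ is the reducible divisor $l_P\spar0+\cdots+l_P\spar{d-1}$; hence the common class equals $\sum_{i=0}^{d-1}[l_P\spar i]$ and lies in $V$. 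Because every line of $\P^2$ has the same class, I may compute $\bar\Psi_*\sigma^*[\text{line}]$ using a convenient representative: the construction of $\Psi$ equips the source $\P^2$ with a pencil of lines whose members map onto the preimages $C_P=\gamma\inv(l_P)$ of tangent lines, and choosing $\sigma^*[\text{line}]$ to be represented by such a member identifies $\bar\Psi_*\sigma^*[\text{line}]$ with $[C_P]$ up to classes of exceptional curves of $\sigma$. Thus, modulo the $E_j$, the image of a line already lands in $V$.

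It remains to control the images $\bar\Psi_*[E_j]$, and this is the main obstacle. The indeterminacy locus of $\Psi$ lies over the nodes of $B$ and over the finitely many special points $\phi(P)$ with $P\in\P^1(\Fqq)$; I expect each $E_j$ to be either contracted by $\bar\Psi$ (class $0$), or mapped onto a component of $\mathrm{Exc}(\rho)$, or onto some $l_P\spar i$, in every case giving a class in $V$. Verifying this requires a careful local analysis along the base locus: one must track how the resolution $\rho$ of the $A_{d-1}$-singularities of $S_d$ sitting over the $(q^2-q)/2$ nodes of $B$ interacts with the blow-ups $\sigma$ resolving the indeterminacy of $\Psi$, and confirm that the resulting exceptional curves on $W$ are carried into the span of the listed curves rather than producing a class transverse to $V$. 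Once every generator of $H^2(W,\mathbb{Q}_\ell)$ is shown to push forward into $V$, the surjectivity of $\bar\Psi_*$ forces $V=H^2(\tilS_d,\mathbb{Q}_\ell)$, which is precisely the assertion.
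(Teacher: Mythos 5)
Your overall strategy---push a spanning set of $H^2$ of a rational surface forward along the generically finite dominant map of degree $2q$ and use $\bar\Psi_*\bar\Psi^*=2q\cdot\id$---is the same mechanism the paper uses. But your proof has a genuine gap exactly where you flag it: you never show that the classes $\bar\Psi_*[E_j]$ of the exceptional curves of $\sigma$ land in $V$, and this is not a routine verification you can defer. A priori an exceptional curve of $\sigma$ could dominate a curve of $\tilS_d$ that is not among the listed ones, and "I expect each $E_j$ to be contracted or mapped onto a listed curve" is precisely the assertion that needs an argument. The same problem infects your treatment of the line class: you compute $\bar\Psi_*\sigma^*[\text{line}]$ only "up to classes of exceptional curves of $\sigma$," so nothing is actually pinned down until the $E_j$ are controlled. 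As written, the proof establishes only that $H^2(\tilS_d,\mathbb{Q}_\ell)$ is spanned by $V$ together with the unidentified classes $\bar\Psi_*[E_j]$.

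The paper closes this gap by a structural device rather than a local analysis. Instead of blowing up the source $\P^2$, it works with the normalized fiber product $M$ of $S_d$ and the incidence variety $L$ of tangent lines; over the open set $A=\P^1\setminus\P^1(\Fqq)$ the induced map $M_A\to A$ is a smooth $\P^1$-bundle, and Lemma~\ref{lem:Pic} deduces that $\Pic(\tilM)$ is generated by the tangency section $\tilSigma_1$ and the irreducible components of the boundary $\Xi=\tilM\setminus M_A$. The point is that $\Xi$ sits over $\P^1(\Fqq)$ by construction, so the commutativity of diagram~\eqref{eq:diagram} forces $\tileta(\Xi)\subset\rho\inv(\gamma\inv(\bigcup_{P\in\P^1(\Fqq)}l_P))$, i.e.\ every boundary component maps into the union of the listed curves \emph{automatically}, with no case-by-case study of how blow-ups interact with the $A_{d-1}$-points. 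Then $[C]=\frac{1}{2q}\tileta_*\tileta^*[C]\in V$ for every irreducible curve $C$, and supersingularity (Corollary~\ref{cor:supersingular}) upgrades this to $V=H^2(\tilS_d)$. If you want to salvage your version, you should replace the arbitrary resolution $W$ of $\P^2\dashrightarrow\tilS_d$ by a model fibred over $\P^1$ (the coordinates $(\tilz,t)$ in the proof of Proposition~\ref{prop:unirational} provide exactly this), so that the only classes beyond a section are supported over $\P^1(\Fqq)$ and hence visibly push forward into $V$.
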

Note that, when $(d, q+1)=(4,4)$ and $(2, 6)$,
the surface $\tilS_d$ is a $K3$ surface.
In these cases,
we can prove that the classes of rational curves given  in Proposition~\ref{prop:supersingular}
generate the N\'eron-Severi lattice $\NS(\tilS_d)$ of $\tilS_d$,
and that the discriminant of $\NS(\tilS_d)$ is $-p^{2}$. 
Using this fact and 
the result of Ogus~\cite{MR563467, MR717616} and Rudakov-Shafarevich~\cite{MR633161}
on the uniqueness of a supersingular $K3$ surface with Artin invariant $1$,
we prove the following:
%
\begin{proposition}\label{prop:ssK3}
{\rm (1)} If $p=q=3$, then $\tilS_4$ is isomorphic to the Fermat quartic surface
$$
 w^4+x^4+y^4+z^4=0.
 $$
{\rm (2)} If $p=q=5$, then $\tilS_2$ is isomorphic to the Fermat sextic double plane
$$
w^2=x^6+y^6+z^6.
 $$
\end{proposition}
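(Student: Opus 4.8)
The plan is to reduce both assertions to the uniqueness theorem of Ogus~\cite{MR563467, MR717616} and Rudakov--Shafarevich~\cite{MR633161}: over an algebraically closed field of characteristic $p$, a supersingular $K3$ surface with Artin invariant $1$---equivalently, one whose N\'eron--Severi lattice has discriminant $-p^2$---is unique up to isomorphism. By Corollary~\ref{cor:supersingular} the surface $\tilS_d$ is supersingular, and in the two relevant cases it is a $K3$ surface whose N\'eron--Severi lattice is generated by the curves of Proposition~\ref{prop:supersingular} and has discriminant $-p^2$. Hence $\tilS_d$ is a supersingular $K3$ surface of Artin invariant $1$, and it remains only to show that the two target surfaces are also supersingular $K3$ surfaces of Artin invariant $1$.

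Both targets are $K3$ surfaces: a smooth quartic in $\P^3$, and the double cover of $\P^2$ branched along a smooth sextic (one checks smoothness of the given equations in characteristics $3$ and $5$). Moreover each is a cyclic cover of $\P^2$ branched along a Fermat curve: $w^4+x^4+y^4+z^4=0$ is, after absorbing the sign, the degree-$4$ cover branched along the Fermat quartic curve, and $w^2=x^6+y^6+z^6$ is the degree-$2$ cover branched along the Fermat sextic curve. As recalled in the introduction, cyclic covers of $\P^2$ branched along the Fermat curve of degree $q+1$ are unirational by Shioda~\cite{MR0374149} and Shioda--Katsura~\cite{MR526513} (here $q=3$ and $q=5$ respectively), and hence supersingular. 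Thus both Fermat models are supersingular $K3$ surfaces.

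It then remains to compute their Artin invariants and confirm that both equal $1$. For this I would exhibit, on each Fermat model, an explicit configuration of rational curves spanning a sublattice of $\NS$ of rank $22$, and evaluate its Gram determinant: for the Fermat quartic in characteristic $3$ the lines lying on the surface furnish such a configuration, while for the Fermat sextic double plane in characteristic $5$ one uses pullbacks of suitable lines and conics meeting the branch curve. The delicate point---and the main obstacle---is to control the index of this sublattice in $\NS$ so that the saturated discriminant is exactly $-p^2$ rather than a larger power of $p$; equivalently, one must verify that the chosen curves already generate $\NS(\tilS_d)$ and not merely a proper finite-index subgroup.

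Granting that each Fermat model has Artin invariant $1$, the surfaces $\tilS_d$ and the corresponding Fermat model are both supersingular $K3$ surfaces of Artin invariant $1$ over the same algebraically closed field, so the Ogus--Rudakov--Shafarevich uniqueness theorem yields the desired isomorphisms, proving~(1) and~(2).
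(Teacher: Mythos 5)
Your overall strategy coincides with the paper's: both arguments reduce the statement to the Ogus--Rudakov--Shafarevich uniqueness of the supersingular $K3$ surface with Artin invariant $1$ in each characteristic. But there is a genuine gap, and it sits exactly where the paper's proof does its real work. You take as given that ``$\NS(\tilS_d)$ is generated by the curves of Proposition~\ref{prop:supersingular} and has discriminant $-p^2$.'' That is not an available input: it is precisely the content of Proposition~\ref{prop:supersingK3}, which the paper proves by identifying the singular locus of $S_4$ (three points of type $A_3$ for $q=3$) resp.\ $S_2$ (ten nodes for $q=5$), listing the $40$ lines $\bar L_\tau^{(\nu)}$ on $S_4$ resp.\ the $14$ curves on $S_2$ lying over the tangent lines $l_P$, working out how these meet the exceptional curves of $\rho$, selecting $22$ of the resulting classes, and verifying that the $22\times22$ Gram matrices (Tables~\ref{table:M22char3} and~\ref{table:M22char5}) have determinant $-9$ resp.\ $-25$. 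Without this computation your argument shows only that $\tilS_d$ is a supersingular $K3$ surface; a priori its Artin invariant could exceed $1$, in which case it would not be isomorphic to the Fermat model. Nothing in your proposal addresses how to carry out this step for $\tilS_d$ itself.

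Two further remarks. First, the ``delicate point'' you single out as the main obstacle --- controlling the index in $\NS$ of the sublattice spanned by your curves --- is not actually an obstacle: for a supersingular $K3$ surface $Y$ one has $\disc\NS(Y)=-p^{2\sigma}$, and a finite-index sublattice $L$ of rank $22$ satisfies $\disc L=[\NS(Y):L]^2\disc\NS(Y)$, so as soon as $22$ curve classes span a lattice of discriminant $-p^2$ the index is forced to be $1$ and $\sigma=1$. This is exactly why Proposition~\ref{prop:supersingK3}, which only exhibits a rank-$22$ sublattice of discriminant $-p^2$, suffices, with no separate saturation argument. Second, where you propose to prove from scratch that the Fermat quartic in characteristic $3$ and the Fermat sextic double plane in characteristic $5$ are supersingular of Artin invariant $1$ (via unirationality of cyclic covers branched along Fermat curves plus an explicit line/conic configuration), the paper simply cites these as known facts about $X_3$ and $X_5$ (see \cite{KondoShimada} and \cite{shimadapreprintchar5}). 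Your route there is workable but duplicates known computations, and in any case it cannot substitute for the missing discriminant computation on $\tilS_d$.
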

Recently, many studies on these supersingular $K3$ surfaces with Artin invariant $1$
in characteristics $3$ and $5$ have been carried out. 
See~\cite{MR2862188, KondoShimada} for characteristic $3$ case,
and~\cite{KKSchar5, shimadapreprintchar5} for characteristic $5$ case.
\par
\medskip
Thanks are due to Masaaki Homma and Satoru Fukasawa for their comments.
We also thank the referee for  his/her suggestion on the first version of this paper.
\section{Basic properties of the Ballico-Hefez curve}
We recall some properties of the Ballico-Hefez curve $B$.
See Fukasawa, Homma and Kim~\cite{MR2961398} for the proofs.
\par
\medskip
It is easy to see that the morphism $\phi:\P^1\to \P^2$ is birational 
onto its image $B$, and that
the degree of the plane curve $B$ is $q+1$.
The singular locus $\Sing (B)$ of $B$ consists of $(q^2-q)/2$ ordinary nodes,
and we have 
$$
\phi\inv(\Sing(B))=\P^1(\Fqq)\setminus \P^1(\Fq).
$$
In particular,  the singular locus $\Sing(S_d)$ of $S_d$ consists of $(q^2-q)/2$ 
ordinary rational double points of type $A_{d-1}$.
Therefore,  
by Artin~\cite{MR0146182, MR0199191},
the surface  $S_d$ is not rational
if  $(d, q+1)\ne(3,3), (2, 4)$.
\par
\medskip
Let $t$ be the affine coordinate of $\P^1$ obtained from $[s:t]$ by putting  $s=1$,
and let $(x, y)$ be the affine coordinates of $\P^2$ such that $[x_0: x_1: x_2]=[1:x:y]$.
Then the morphism $\phi:\P^1\to\P^2$ is given by 
$$
t\mapsto (t^{q+1}, t^q+t).
$$
For a point $P=[1:t]$ of $\P^1$,
the line $l_P$ is defined by
$$
x-t^q y +t^{2q}=0.
$$

Suppose that $P\notin \P^1(\Fqq)$.
Then $l_P$ intersects $B$ at $\phi(P)=(t^{q+1}, t^q+t)$
with multiplicity $q$ and at the point $(t^{q^2+q}, t^{q^2}+t^q)\ne \phi(P)$
with multiplicity $1$.
In particular, we have $l_P\cap \Sing(B)=\emptyset$.
 
Suppose that $P\in \P^1(\Fqq)\setminus\P^1(\Fq)$.
Then $l_P$ intersects $B$ at the node $\phi(P)$ of $B$
with multiplicity $q+1$.
More precisely,
$l_P$ intersects the branch of $B$  corresponding to $P$
with multiplicity $q$,
and the other branch transversely.

Suppose that $P\in \P^1(\Fq)$.
Then $\phi(P)$ is a smooth point of $B$,
and $l_P$ intersects $B$ at $\phi(P)$
with multiplicity $q+1$.
In particular, we have $l_P\cap \Sing(B)=\emptyset$.

Combining these facts, we see that 
$\phi(\P^1(\Fq))$ coincides with the set of 
smooth inflection points of $B$.
(See~\cite{MR2961398} for the definition of inflection points.)

\section{Proof of Proposition~\ref{prop:aut}}

We denote by $\phi_B: \P^1\to B$ the birational morphism  $t\mapsto (t^{q+1}, t^q+t)$
from $\P^1$ to $B$.
We identify $\Aut(\P^1)$ with   $\PGL_2(k)$
by letting  $\PGL_2(k)$ act on $\P^1$ by
$$
[s:t]\mapsto [as +bt: cs +dt]\quad\textrm{for}\quad \left[\begin{array}{cc} a & b\\ c& d \end{array}\right] \in \PGL_2(k).
$$
Then $\PGL_2(\Fq)$ is the subgroup of $\PGL_2(k)$ consisting of elements 
that leave the set $\P^1(\Fq)$  invariant.
Since $\phi_B$ is birational,
the projective automorphism group $\Aut(B)$ of $B$ acts on $\P^1$ via $\phi_B$.
The subset  $\phi_B(\P^1(\Fq))$ of $B$ is projectively characterized as
the set of smooth inflection points of $B$,
and we have $\P^1(\Fq)=\phi_B\inv(\phi_B(\P^1(\Fq)))$.
Hence 
$\Aut(B)$ is contained in  the subgroup $\PGL_2(\Fq)$ of $\PGL_2(k)$.
Thus, in order to prove Proposition~\ref{prop:aut},
it is enough to show that
every element
$$
 g:=\left[\begin{array}{cc} a & b\\ c& d \end{array}\right] 
 \quad\textrm{with}\quad a,b,c,d\in \Fq
$$
of $\PGL_2(\Fq)$ is coming from the action of an element of $\Aut(B)$.
We put
$$
\tilde{g}:=\left[\begin{array}{ccc} a^2 & b^2 & ab \\ c^2 & d^2 & cd \\ 2ac & 2bd & ad+bc \end{array}\right],
$$
and let the matrix $\tilde{g}$ act on $\P^2$ by the left multiplication 
on the column vector ${}^{t}[x_0: x_1:x_2]$.
Then we have
$$
\phi\circ g=\tilde{g}\circ \phi, 
$$ 
because we have $\lambda^q=\lambda$ for $\lambda=a,b,c,d\in \Fq$.
Therefore $g\mapsto \tilde{g} $ gives an isomorphism
from $\PGL_2(\Fq)$ to $\Aut(B)$.
\section{Proof of Proposition~\ref{prop:defeq}}
We put
$$
F(x, y):=\begin{cases}
x+x^{q}+y^{q+1}+\sum_{i=0}^{\nu-1}x^{2^i}y^{q+1-2^{i+1}} & \textrm{if $p=2$ and $q=2^\nu$,} \\
2x+2x^q-y^{q+1}-(y^2-4x)^{\frac{q+1}{2}} & \textrm{if $p$ is odd,} 
\end{cases}
$$
that is, $F$ is obtained from the homogeneous polynomial in Proposition~\ref{prop:defeq}
by putting $x_0=1, x_1=x, x_2=y$.
Since the polynomial $F$ is of degree $q+1$ and 
the plane curve $B$ is also of degree $q+1$,
it is enough to show that 
$F(t^{q+1}, t^q+t)=0$.
\par
\medskip
Suppose that $p=2$ and $q=2^\nu$.
We put 
 $$
 S(x,y):=\sum_{i=0}^{\nu-1}\left(\frac{x}{y^2}\right)^{2^i}.
 $$
Then  $S(x,y)$ is  a root of the Artin-Schreier equation 
$$ 
s^2+s=\left(\frac{x}{y^2}\right)^q+\frac{x}{y^2}.
$$
Hence $S_1:=S(t^{q+1},t^q+t)$
 is a root of the equation $s^2+s=b$,
where 
$$
b:=\left[\frac{t^{q+1}}{(t^q+t)^2}\right]^q+\frac{t^{q+1}}{(t^q+t)^2}=\frac{t^{2q^2+q+1}+t^{q^2+3q}+t^{q^2+q+2}+t^{3q+1}}{(t^q+t)^{2q+2}}.
$$
We put  
$$
S\sprime(x,y):=\frac{x+x^q+y^{q+1}}{y^{q+1}}. 
$$
We can verify that $S_2:=S\sprime(t^{q+1},t^q+t)$ is also
a root of the equation $s^2+s=b$. 
Hence we have either $S_1=S_2$ or $S_1=S_2+1$.
We can easily see that both of 
the rational functions $S_1$ and $S_2$ on $\P^1$ have zero at $t=\infty$.
Hence $S_1=S_2$ holds, from which we obtain
 $F(t^{q+1},t^q+t)=0$.
\par
\medskip
Suppose that $p$ is odd.
We put
\begin{eqnarray*}
&&S(x,y):=2x+2x^q-y^{q+1}, \quad S_1:=S(t^{q+1},t^q+t), \quand \\
&&S\sprime(x,y):=(y^2-4x)^{\frac{q+1}{2}},  \quad S_2:=S\sprime(t^{q+1},t^q+t).
\end{eqnarray*}
Then it is easy to verify that both of $S_1^2$ and $S_2^2$ are equal to 
$$
t^{2q^2+2q}-2t^{2q^2+q+1}+t^{2q^2+2}-2t^{q^2+3q}+4t^{q^2+2q+1}-2t^{q^2+q+2}+t^{4q}-2t^{3q+1}+t^{2q+2}.
$$
Therefore either $S_1=S_2$ or $S_1=-S_2$
holds.
Comparing the coefficients of the top-degree terms of 
the polynomials $S_1$ and $S_2$ of $t$,
we see that $S_1=S_2$,
whence  $F(t^{q+1},t^q+t)=0$ follows.
\section{Proof of Propositions~\ref{prop:unirational} and~\ref{prop:supersingular}}
We consider the universal family 
$$
 L :=\set{(P, Q)\in \P^1\times\P^2}{Q\in l_P}
$$
of the lines $l_P$, 
which is defined by 
$$
x-t^q y +t^{2q}=0
$$
 in  $\P^1\times\P^2$, and let
$$
\pi_1:  L \to\P^1, \quad
\pi_2:  L \to \P^2
$$
be the projections.
We see that  $\pi_1: L\to \P^1$ has two sections
\begin{eqnarray*}
\sigma_1 &:& t\mapsto (t, x, y)=(t, t^{q+1}, t^q+t),\\
\sigma_q &:& t\mapsto (t, x, y)=(t, t^{q^2+q}, t^{q^2}+t^q).
\end{eqnarray*}
For $P\in \P^1$, we have $\pi_2(\sigma_1(P))=\phi(P)$ and 
$l_P\cap B=\{\pi_2(\sigma_1(P)), \pi_2(\sigma_q(P))\}$.
Let $\Sigma_1\subset L$ and $\Sigma_q\subset L$ denote the images of $\sigma_1$
and $\sigma_q$, respectively.
Then $\Sigma_1$ and $\Sigma_q$ are smooth curves,
and they intersect transversely.
Moreover, their intersection points are contained in $\pi_1\inv (\P^1(\Fqq))$.
\par
\medskip
We denote by  $\clM$ the fiber product 
of $\gamma: S_d\to\P^2$ and $\pi_2:L\to \P^2$ over $\P^2$.
The pull-back   $\pi_2^* B$  of  
$B$  by $\pi_2$ is equal to the divisor $q\Sigma_1+\Sigma_q$.
Hence $\clM$ is defined by
\begin{equation}\label{eq:clM}
\begin{cases}
z^d=(y-t^q-t)^q (y-t^{q^2}-t^q), & \\
x-t^q y +t^{2q}=0.
\end{cases}
\end{equation}
We denote by  $M\to \clM$  the normalization, 
and  by
$$
\alpha : M \to L, \quad \eta: M\to S_d
$$
the natural projections.
Since $d$ is prime to $q$,
the cyclic covering   $\alpha: M\to  L $ of degree $d$
branches exactly  along 
the curve $\Sigma_1\cup \Sigma_q$.
Moreover, 
 the singular locus $\Sing(M)$ of $M$ is located over $\Sigma_1\cap \Sigma_q$,
 and hence is contained in
$\alpha\inv(\pi_1\inv (\P^1(\Fqq)))$.
\par
\medskip
Since $\eta$ is dominant and $\rho: \tilS_d\to S_d$ is birational,
 $\eta$ induces a rational map
$$
\eta\sprime: M\cdots\to \tilS_d.
$$
Let $A$ denote the affine open curve $\P^1\setminus \P^1(\Fqq)$.
We put
$$
L_A:=\pi_1\inv(A),
\quad
M_A:=\alpha\inv(L_A).
$$
Note that $M_A$ is smooth.
Let $\pi_{1, A}: L_A\to A$ and $\alpha_A: M_A\to L_A$ be the restrictions of $\pi_1$ and $\alpha$,
respectively.
If $P\in A$, then  $l_P$ is disjoint from $\Sing(B)$,
and hence $\eta(\alpha\inv(\pi_1\inv(P)))=\gamma\inv(l_P)$ is disjoint from $\Sing(S_d)$.
Therefore the restriction of $\eta\sprime$ to $M_A$ is a morphism.
It follows that we have a proper birational morphism
$$
\beta: \tilM\to M
$$
from a smooth surface $\tilM$ to $M$
such that $\beta$ induces an isomorphism from $\beta\inv(M_A)$ to $M_A$
and that the rational map $\eta\sprime$ extends to 
a morphism $\tileta: \tilM\to \tilS_d$.
Summing up,
we obtain the following commutative diagram:
\begin{equation}\label{eq:diagram}
\begin{array}{ccccc}
M_A & \inj  & \tilM &\maprightsp{\tileta} & \tilS_d \\
|| & \square &\mapdownright{\beta} & & \mapdownright{\rho} \\
M_A & \inj & M & \maprightsp{\eta} & S_d \\
\mapdownleft{\alpha_A} & \square & \mapdownright{\alpha} &  & \mapdownright{\gamma} \\
L_A & \inj & L & \maprightsp{\pi_2} & \P^2 \\
\mapdownleft{\pi_{1, A}} & \square & \mapdownright{\pi_1} & & \\
A & \inj & \P^1 &\hskip -20pt .&
\end{array}
\end{equation}
\par
\medskip
Since the defining equation 
$x-t^q y +t^{2q}=0$
 of $ L $ in $\P^1\times \P^2$
 is a polynomial in $k[x, y][t^q]$,
 and its discriminant as a quadratic equation of $t^q$ is $y^2-4x\ne 0$, 
the projection $\pi_2$ is a finite morphism  of degree $2q$ and its inseparable degree is $q$.
Hence  $\eta$ is also a finite morphism 
of degree $2q$ and its inseparable degree is $q$.
Therefore, in order to prove Proposition~\ref{prop:unirational},
 it is enough to show that $M$ is rational.
 We denote by $k(M)=k(\clM)$ the function field of $M$.
 Since $ x=t^qy-t^{2q}$ on $\clM$, the field $k(M)$ 
 is generated over $k$ by $y, z$ and $t$.
 Let $c$ denote the integer $(q+1)/d$, and put
 $$
 \tilz:= \frac{z}{(y-t^q-t)^{c}} \in k(M).
 $$
 Then, from the defining equation~\eqref{eq:clM} of $\clM$, we have
 $$
 \tilz^d=\frac{y-t^{q^2}-t^q}{y-t^q-t}.
 $$
 Therefore we have 
 $$
 y=\frac{ \tilz^d(t^q+t)-(t^{q^2}+t^q)}{ \tilz^d-1},
 $$
 and hence $k(M)$ is equal to the purely transcendental extension $k(\tilz, t)$ of $k$.
Thus Proposition~\ref{prop:unirational} is proved.
\par
\medskip
We put
$$
\Xi:=\tilM\setminus M_A=\beta\inv(\alpha\inv(\pi_1\inv(\P^1(\Fqq)))).
$$
Since the cyclic covering $\alpha: M\to L$ branches along the curve $\Sigma_1=\sigma_1(\P^1)$,
the section $\sigma_1: \P^1\to L$ of $\pi_1$
lifts to a section $\tilde\sigma_1: \P^1\to M$ of $\pi_1\circ \alpha$.
Let $\tilSigma_1$ denote the strict transform of 
the image of  $\tilde\sigma_1$ by $\beta:\tilM\to M$.
%
\begin{lemma}\label{lem:Pic}
The Picard group $\Pic (\tilM)$ of $\tilM$ is generated by 
the classes of $\tilSigma_1$ and the irreducible components of   $\Xi$.
\end{lemma}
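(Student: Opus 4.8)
The plan is to realize $\tilM$ as a rational surface ruled over $\P^1$ and to read off its Picard group from this ruling. Set $\pi:=\pi_1\circ\alpha\circ\beta:\tilM\to\P^1$, the composite of the left-hand vertical maps of the diagram~\eqref{eq:diagram}. Since we have already computed $k(M)=k(\tilz,t)$ with $t$ the coordinate on the base $\P^1$, the generic fiber of $\pi$ has function field $k(t)(\tilz)$ with $\tilz$ transcendental over $k(t)$, and is therefore isomorphic to $\P^1$ over $k(t)$; thus $\pi$ is a fibration of the smooth projective rational surface $\tilM$ whose generic fiber is a smooth rational curve. Concretely, for $P\in A$ the map $\beta$ is an isomorphism near $\pi\inv(P)$, and $\pi\inv(P)=\alpha\inv(l_P)$ is the degree-$d$ cyclic cover of $l_P\cong\P^1$ branched at the two distinct points $\Sigma_1\cap l_P$ and $\Sigma_q\cap l_P$, hence an irreducible rational curve. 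So every fiber of $\pi$ over $A$ is irreducible, whereas by construction
$$
\Xi=\tilM\setminus M_A=\beta\inv(\alpha\inv(\pi_1\inv(\P^1(\Fqq))))
$$
is exactly the union of the fibers of $\pi$ lying over $\P^1(\Fqq)$; in particular every irreducible component of every fiber of $\pi$ is either a whole fiber over a point of $A$ or a component of $\Xi$.

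I would then invoke the standard description of the Picard group of a fibered surface in terms of a multisection and the fiber components. The section $\sigma_1$ lifts to $\tilde\sigma_1$, and its strict transform $\tilSigma_1$ meets the generic fiber in a single point (again because $\beta$ is an isomorphism over $M_A$), so $\tilSigma_1$ is a $1$-section and the restriction $\tilSigma_1\cap\tilM_\eta$ generates $\Pic(\tilM_\eta)\cong\Z$. Restriction to the generic fiber $\tilM_\eta$ gives an exact sequence
$$
\bigoplus_{\Theta}\Z\,[\Theta]\;\longrightarrow\;\Pic(\tilM)\;\longrightarrow\;\Pic(\tilM_\eta)\;\longrightarrow\;0,
$$
where $\Theta$ runs over the irreducible components of the fibers of $\pi$ and the first map has image the kernel of the restriction map. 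Consequently $\Pic(\tilM)$ is generated by $[\tilSigma_1]$ together with the classes of all irreducible components of the fibers of $\pi$.

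It remains to discard the fibers over $A$ from this generating set. The class $F$ of a general fiber equals the class of every fiber; choosing a point $P\in\P^1(\Fqq)$ and writing $\pi\inv(P)=\sum_i m_i\Theta_i$, we see that $F=\sum_i m_i[\Theta_i]$ lies in the subgroup generated by the components of $\Xi$, since each $\Theta_i$ is a component of $\Xi$. As every fiber over $A$ is irreducible and therefore has class $F$, the subgroup generated by all fiber components coincides with the subgroup generated by the components of $\Xi$. Combined with the previous paragraph, this shows that $\Pic(\tilM)$ is generated by $[\tilSigma_1]$ and the classes of the irreducible components of $\Xi$, which is the assertion of the lemma.

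The step I expect to require the most care is the identification of $\Xi$ with the entire union of the fibers over $\P^1(\Fqq)$, exceptional curves of $\beta$ included. This relies on knowing that $\beta$ alters $M$ only over $\pi_1\inv(\P^1(\Fqq))$, which is exactly what the isomorphism $\beta\inv(M_A)\cong M_A$ in the diagram~\eqref{eq:diagram} guarantees, so that no exceptional component of $\beta$ can lie in a fiber over a point of $A$. Granting this, the general fibration-theoretic input, namely that the Picard group of a surface fibered over $\P^1$ with rational generic fiber is generated by a $1$-section and the fiber components, is routine.
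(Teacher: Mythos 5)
Your proof is correct and is essentially the paper's argument: both exploit the ruling $\pi_1\circ\alpha\circ\beta:\tilM\to\P^1$ with $\tilSigma_1$ as a section and the fact that over $A$ the fibration is a smooth $\P^1$-bundle with irreducible fibers, so that everything vertical can be pushed into $\Xi$. The paper phrases the localization step via $\Pic(M_A)$ (a divisor of fiber degree zero restricts to a multiple of a fiber on the bundle over the affine curve $A$) rather than via the generic-fiber exact sequence, but this is the same computation.
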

\begin{proof}
Since    $\Sigma_1\cap \Sigma_q\cap L_A=\emptyset$, 
the morphism 
$$
\pi_{1, A}\circ \alpha_A : M_A \to A
$$
is a smooth $\P^1$-bundle.
Let $D$ be an irreducible curve on $\tilM$,
and let $e$ be the degree of
$$
\pi_1\circ\alpha\circ \beta|_D : D\to \P^1.
$$
Then the divisor $D-e\tilSigma_1$ on $\tilM$ is of degree $0$
on the general  fiber of the  smooth $\P^1$-bundle $\pi_{1, A}\circ \alpha_A$.
Therefore $(D-e\tilSigma_1)|_{M_A}$ is linearly equivalent in $M_A$ to a multiple of 
a fiber  of  $\pi_{1, A}\circ \alpha_A$.
Hence $D$ is linearly equivalent to a linear combination of $\tilSigma_1$
and irreducible curves in 
the boundary  $\Xi=\tilM\setminus M_A$.
\end{proof}
The rational curves on $\tilS_d$ listed in Proposition~\ref{prop:supersingular}
are exactly equal to  the irreducible components of
$$
\rho\inv (\gamma\inv (\bigcup_{P\in \P^1(\Fqq)} l_P)).
$$
Let $V\subset H^2(\tilS_d)$ denote the linear subspace spanned by 
the classes of these rational curves.
We will show that $V=H^2(\tilS_d)$.
\par
\medskip
Let $h\in H^2(\tilS_d)$
denote the class of the pull-back
of a  line of $\P^2$ by the morphism $\gamma\circ \rho: \tilS_d\to \P^2$.
Suppose that  $P\in \P^1(\Fq)$. Then $l_P$ is disjoint from $\Sing(B)$.
Therefore we have 
$$
h=[(\gamma\circ \rho)^*(l_P)]=[l_P\spar{0}]+\cdots +[l_P\spar{d-1}] \in V.
$$
Let $\tilB$  denote 
the   strict transform of $B$ by $\gamma\circ \rho$.
Then $\tilB$  is written as $d\cdot R$,
where $R$ is a reduced curve on $\tilS_d$ whose support is equal to $\tileta(\tilSigma_1)$.
On the other hand,
the class of the total transform $(\gamma\circ \rho)^*B$ of $B$ by $\gamma\circ \rho$
is equal to $(q+1)h$.
%
Since the difference of the divisors $d\cdot R$ and $(\gamma\circ \rho)^*B$
is a linear combination of exceptional curves of $\rho$, 
 we have 
\begin{equation}\label{eq:imSigma}
\tileta_* ([\tilSigma_1]) \in V.
\end{equation}
By the commutativity of the diagram~\eqref{eq:diagram},
we have
\begin{equation*}\label{eq:imXi}
\tileta (\Xi) \;\;\subset\;\; \rho\inv (\gamma\inv (\bigcup_{P\in \P^1(\Fqq)} l_P)).
\end{equation*}
Hence, for any irreducible component $\Gamma$ of $\Xi$,
we have 
\begin{equation}\label{eq:imGamma}
\tileta_* ([\Gamma]) \in V.
\end{equation}
Let $C$ be an arbitrary  irreducible curve on $\tilS_d$.
Then we have
$$
\tileta_*\tileta^* ([C])=2q [C].
$$
By Lemma~\ref{lem:Pic},
there exist integers $a$, $b_1, \dots, b_m$ and irreducible components
$\Gamma_1, \dots, \Gamma_m$ of $\Xi$
such that 
the divisor $\eta^*C$ of $\tilM$ is linearly equivalent to
$$
a \tilSigma_1 + b_1\Gamma_1+\cdots+ b_m \Gamma_m.
$$
By~\eqref{eq:imSigma} and~\eqref{eq:imGamma}, 
we obtain 
$$
[C]=\frac{1}{2q} \tileta_*\tileta^* ([C]) \in V.
$$
Therefore  $V\subset H^2(\tilS_d)$ is equal to the linear subspace spanned by 
the classes of all curves.
Combining this fact with Corollary~\ref{cor:supersingular},
we obtain  $V=H^2(\tilS_d)$.
\section{Supersingular $K3$ surfaces}
In this section, we prove Proposition~\ref{prop:ssK3}.
First, we recall some facts on supersingular $K3$ surfaces.
Let $Y$ be a supersingular $K3$ surface in characteristic $p$,
and let $\NS(Y)$ denote its N\'eron-Severi lattice,
which is an even hyperbolic lattice of rank $22$.
Artin~\cite{MR0371899} showed that the discriminant of $\NS(Y)$ is written as $-p^{2\sigma}$,
where $\sigma$ is a positive integer $\le 10$.
This integer $\sigma$ is called the \emph{Artin invariant} of $Y$.
Ogus~\cite{MR563467, MR717616} and Rudakov-Shafarevich~\cite{MR633161}
proved that, for each $p$, 
 a supersingular $K3$ surface with Artin invariant $1$ is unique up to isomorphisms.
 Let $X_p$ denote the supersingular $K3$ surface with Artin invariant $1$
 in characteristic $p$.
 It is known that $X_3$ is isomorphic to the Fermat quartic surface, 
 and that $X_5$ is isomorphic to the Fermat sextic double plane.
 (See, for example, \cite{KondoShimada} and~\cite{shimadapreprintchar5}, respectively.)
Therefore,
in order to prove Proposition~\ref{prop:ssK3},
it is enough to prove the following:
\begin{proposition}\label{prop:supersingK3}
Suppose that $(d, q+1)=(4,4)$ or $(2, 6)$.
Then, among the  curves on $\tilS_d$ listed in Proposition~\ref{prop:supersingular},
there exist $22$ curves whose classes together with the intersection pairing 
form a lattice of rank $22$ with discriminant $-p^2$.
\end{proposition}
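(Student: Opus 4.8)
The plan is to exhibit $22$ explicit curves among those listed in Proposition~\ref{prop:supersingular}, compute their intersection matrix, and verify that this matrix has rank $22$ and determinant $\pm p^2$. Since the surfaces in question are $K3$ surfaces, $H^2(\tilS_d)$ has rank $22$, and by Proposition~\ref{prop:supersingular} the classes of the exceptional curves together with the $l_P\spar{i}$ already span $H^2(\tilS_d)\otimes\Q$; so a sublattice of rank $22$ is automatically of finite index, and we only need enough curves to achieve full rank and to pin down the discriminant. The two cases $(d,q+1)=(4,4)$ (so $p=q=3$) and $(d,q+1)=(2,6)$ (so $p=q=5$) will be handled separately but by the same method.

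First I would enumerate the relevant curves concretely. The node count is $(q^2-q)/2$, giving $3$ nodes when $q=3$ and $10$ nodes when $q=5$; each node of $B$ produces a rational double point of $S_d$ of type $A_{d-1}$, contributing $d-1$ exceptional curves forming an $A_{d-1}$ chain. Thus the exceptional divisor contributes $3\cdot 3=9$ curves when $(d,q)=(4,3)$ and $10\cdot 1=10$ curves when $(d,q)=(2,5)$. The remaining curves come from the lines $l_P$ with $P\in\P^1(\Fqq)$: for each such $P$ the strict transform splits into $d$ rational curves $l_P\spar{0},\dots,l_P\spar{d-1}$. I would select from these families a set of $22$ curves and record, using the intersection data already established in Section~2 (how $l_P$ meets $B$ and the nodes, depending on whether $P$ lies in $\P^1(\Fq)$, in $\P^1(\Fqq)\setminus\P^1(\Fq)$, or outside), the pairwise intersection numbers and self-intersections. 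The self-intersection of each $l_P\spar{i}$ and its intersection with the exceptional chains follow from how $l_P$ passes through (or avoids) the nodes, together with the cyclic-cover ramification behaviour; the exceptional curves themselves have the standard $A_{d-1}$ intersection pattern.

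The main obstacle I expect is the bookkeeping needed to assemble a correct and non-degenerate $22\times 22$ intersection matrix and then to compute its determinant. Choosing which $22$ curves to take so that the matrix is manageable and provably of rank $22$ is the delicate point: one wants to use the incidence structure of the $\Fqq$-rational points and tangent lines, perhaps organizing the curves by how the finite group $\PGL_2(\Fq)=\Aut(B)$ permutes them, so that the matrix has enough symmetry to make the determinant computation tractable. Once the matrix is written down, computing its discriminant is a finite linear-algebra task (which in practice one verifies by direct calculation or machine computation), and the claim is that the answer is $-p^2$ in both cases.

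Having produced a rank-$22$ sublattice $\Lambda\subset\NS(\tilS_d)$ of discriminant $-p^2$, the proof concludes as follows. Because $\tilS_d$ is a supersingular $K3$ surface, $\NS(\tilS_d)$ has rank $22$ and discriminant $-p^{2\sigma}$ for some Artin invariant $\sigma\ge 1$; since $\Lambda$ has finite index in $\NS(\tilS_d)$ and $|\disc\Lambda|=p^2$ is the minimal possible value of $|\disc\NS(\tilS_d)|=p^{2\sigma}$, we must have $\sigma=1$ and $\Lambda=\NS(\tilS_d)$. This identifies $\tilS_d$ as the supersingular $K3$ surface with Artin invariant $1$, which by the uniqueness theorem of Ogus and Rudakov--Shafarevich is $X_p$, thereby proving Proposition~\ref{prop:supersingK3} and hence Proposition~\ref{prop:ssK3}.
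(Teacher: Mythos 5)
Your overall strategy is exactly the paper's: choose $22$ of the listed curves ($9$ exceptional curves plus $13$ line components for $(d,q)=(4,3)$; $8$ exceptional curves plus $14$ line components for $(d,q)=(2,5)$, out of the $10$ available nodes), compute the Gram matrix, check that its determinant is $-p^2$, and conclude $\sigma=1$ by the minimality of $p^2$ among the possible discriminants $-p^{2\sigma}$. That framing, and the final index argument, are correct.

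The genuine gap is that the proposal never actually produces the $22$ curves or the matrix, and — more importantly — the data you propose to use is not sufficient to produce it. The intersection facts from Section~2 tell you how $l_P$ meets $B$ and the nodes downstairs, but the Gram matrix lives upstairs on $\tilS_d$: you must determine, for each pair $P,P'$, \emph{which} of the $d$ lifts $l_P\spar{i}$ meets \emph{which} lift $l_{P'}\spar{j}$, and, for $P\in\P^1(\Fqq)\setminus\P^1(\Fq)$, which component of the $A_{d-1}$ chain over the corresponding node each lift $l_P\spar{\nu}$ hits. None of this is visible from the incidence data on $\P^2$; it requires an explicit model. This is why the paper first proves Proposition~\ref{prop:defeq}: the defining equation realizes $S_4$ as a quartic in $\P^3$ (resp.\ $S_2$ as a double plane in $\P(3,1,1,1)$), the curves $l_P\spar{\nu}$ become explicit linear sections $\bar L_\tau\spar{\nu}$, and the resolution over each node is computed by hand to see, e.g., that $L_{a+b\alpha}\spar{\nu}$ meets exactly the component $E_{a+b\alpha}$ of the $A_3$ chain and that the four points $L_{a+b\alpha}\spar{\nu}\cap E_{a+b\alpha}$ are distinct. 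Without carrying out this explicit computation (and then the determinant evaluation, which is the actual content of the proposition), the argument remains a plan rather than a proof; the appeal to $\PGL_2(\Fq)$-symmetry is a reasonable organizing idea but does not by itself supply the matrix entries.
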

\begin{proof}
Suppose that $p=q=3$ and $d=4$.
We put $\alpha:=\sqrt{-1}\in \F_9$, so that $\F_9:=\F_3 (\alpha)$.
Consider the projective space $\P^3$
with homogeneous coordinates $[w: x_0: x_1: x_2]$.
By Proposition~\ref{prop:defeq}, the surface $S_4$ is defined in $\P^3$ by an equation
$$
w^4=2(x_0^3 x_1 +x_0 x_1^3)-x_2^4-(x_2^2-x_1 x_0)^2.
$$
Hence the singular locus $\Sing(S_4)$ of $S_4$ consists of the three points
\begin{eqnarray*}
Q_0&:=&[0: 1: 1: 0]\quad (\textrm{located over\;\;}  \phi([1:\alpha])=\phi([1:-\alpha])\in B),\\
Q_1&:=&[0: 1: 2: 1]\quad (\textrm{located over\;\;}  \phi([1:1+\alpha])=\phi([1:1-\alpha])\in B),\\
Q_2&:=&[0: 1: 2: 2]\quad (\textrm{located over\;\;}  \phi([1:2+\alpha])=\phi([1:2-\alpha])\in B),
\end{eqnarray*}
and they are  rational double points of type $A_3$.
The minimal resolution $\rho: \tilS_4\to S_4$
is obtained by blowing up twice over each singular point $Q_a$ $(a\in \F_3)$.
The rational curves $l_P\spar{i}$ on $\tilS_4$ given in Proposition~\ref{prop:supersingular}
are the strict transforms of the following $40$ lines $\bar{L}_{\tau}\spar{\nu}$ in $\P^3$
contained in $S_4$,
where $\nu=0, \dots, 3$:
\begin{eqnarray*}
\bar{L}_{0}\spar{\nu}&:=&\{x_1=w-\alpha^{\nu} x_2=0\},\\
\bar{L}_{1}\spar{\nu}&:=&\{x_0+x_1-x_2=w-\alpha^{\nu} (x_2+x_0)=0\},\\
\bar{L}_{2}\spar{\nu}&:=&\{x_0+x_1+x_2=w-\alpha^{\nu} (x_2-x_0)=0\},\\
\bar{L}_{\infty}\spar{\nu}&:=&\{x_0=w-\alpha^{\nu} x_2=0\},\\
\bar{L}_{\pm \alpha}\spar{\nu}&:=&\{-x_0+x_1\pm\alpha x_2=w-\alpha^{\nu} x_2=0\},\\
\bar{L}_{1\pm \alpha}\spar{\nu}&:=&\{\pm \alpha x_0+x_1+(-1\pm \alpha)x_2=w-\alpha^{\nu} (x_2+x_0)=0\},\\
\bar{L}_{2\pm \alpha}\spar{\nu}&:=&\{\mp \alpha x_0+x_1+(1\pm \alpha)x_2=w-\alpha^{\nu} (x_2-x_0)=0\}.
\end{eqnarray*}
We denote by   $L_{\tau}\spar{\nu}$  the strict transform of $\bar{L}_{\tau}\spar{\nu}$ by $\rho$.
Note that the image of $\bar{L}_{\tau}\spar{\nu}$ by 
the covering morphism $S_4 \to \P^2$
is the line $l_{\phi([1:\tau])}$.
Note also that, if $\tau\in \F_3\cup\{\infty\}$,
then $\bar{L}_{\tau}\spar{\nu}$ is disjoint from $\Sing (S_4)$,
while if $\tau=a+b\alpha\in \F_9\setminus \F_3$ with $a\in \F_3$ and $b\in\F_3\setminus \{0\}=\{\pm 1\}$,
then $\bar{L}_{\tau}\spar{\nu}\cap \Sing (S_4)$ consists 
of a single point $Q_a$.
Looking at the minimal resolution $\rho$ over $Q_a$ explicitly, we see that 
the three exceptional $(-2)$-curves in $\tilS_4$ over $Q_a$ can be labeled
as $E_{a-\alpha}, E_{a}, E_{a+\alpha}$ in such a way that the following hold:
\begin{itemize}
\item 
$\intnumb{E_{a-\alpha}, E_a}=\intnumb{E_a, E_{a+\alpha}}=1$, $\intnumb{E_{a-\alpha},  E_{a+\alpha}}=0$.
\item
Suppose that $b\in \{\pm 1\}$.
Then $L_{a+b\alpha}\spar{\nu}$ intersects $E_{a+b\alpha}$,  and is disjoint from 
the other two irreducible components $E_{a}$ and $E_{a-b\alpha}$.
\item The four intersection points of  $L_{a+b\alpha}\spar{\nu}$ 
($\nu=0, \dots , 3$) and $E_{a+b\alpha}$ are distinct.
\end{itemize}
Using these,
we can calculate the intersection numbers among the $9+40$ curves
$E_{\tau}$ and $L_{\tau\sprime}\spar{\nu}$ ($\tau\in \F_9$, $\tau\sprime \in \F_9\cup\{\infty\}$, $\nu=0, \dots, 3$).
From among them, 
we choose the following $22$ curves:
\begin{eqnarray*}
&& E_{-\alpha},
E_{0},
E_{\alpha},
E_{1-\alpha},
E_{1},
E_{1+\alpha},
E_{2-\alpha},
E_{2},
E_{2+\alpha},\\
&& L_{0}\spar{0},
L_{0}\spar{1},
L_{0}\spar{2},
L_{0}\spar{3},
L_{1}\spar{0},
L_{1}\spar{1},
L_{2}\spar{0},
L_{2}\spar{1},
L_{\infty}\spar{1},\\
&& L_{-\alpha}\spar{0},
L_{-\alpha}\spar{1},
L_{1-\alpha}\spar{2},
L_{2-\alpha}\spar{0}.
\end{eqnarray*}
Their intersection numbers are calculated as 
 in Table~\ref{table:M22char3}.
 We can easily check that 
this matrix is of determinant $-9$.
Therefore the Artin invariant of $\tilS_4$ is $1$.
\begin{table}
{\small
$$
\left[ 
\begin {array}{c@{\hspace{2pt}}c@{\hspace{2pt}}c@{\hspace{2pt}}c@{\hspace{2pt}}c@{\hspace{2pt}}c@{\hspace{2pt}}c@{\hspace{2pt}}c@{\hspace{2pt}}c@{\hspace{2pt}}c@{\hspace{2pt}}c@{\hspace{2pt}}c@{\hspace{2pt}}c@{\hspace{2pt}}c@{\hspace{2pt}}c@{\hspace{2pt}}c@{\hspace{2pt}}c@{\hspace{2pt}}c@{\hspace{2pt}}c@{\hspace{2pt}}c@{\hspace{2pt}}c@{\hspace{2pt}}c@{\hspace{2pt}}} 
-2&1&0&0&0&0&0&0&0&0&0&0&0&0&0&0&0&0&1&1&0&0\\
1&-2&1&0&0&0&0&0&0&0&0&0&0&0&0&0&0&0&0&0&0&0\\
0&1&-2&0&0&0&0&0&0&0&0&0&0&0&0&0&0&0&0&0&0&0\\
0&0&0&-2&1&0&0&0&0&0&0&0&0&0&0&0&0&0&0&0&1&0\\
0&0&0&1&-2&1&0&0&0&0&0&0&0&0&0&0&0&0&0&0&0&0\\
0&0&0&0&1&-2&0&0&0&0&0&0&0&0&0&0&0&0&0&0&0&0\\
0&0&0&0&0&0&-2&1&0&0&0&0&0&0&0&0&0&0&0&0&0&1\\
0&0&0&0&0&0&1&-2&1&0&0&0&0&0&0&0&0&0&0&0&0&0\\
0&0&0&0&0&0&0&1&-2&0&0&0&0&0&0&0&0&0&0&0&0&0\\
0&0&0&0&0&0&0&0&0&-2&1&1&1&0&0&0&0&0&1&0&0&0
\\0&0&0&0&0&0&0&0&0&1&-2&1&1&0&0&0&0&1&0&1&0&0
\\0&0&0&0&0&0&0&0&0&1&1&-2&1&1&0&1&0&0&0&0&0&0
\\0&0&0&0&0&0&0&0&0&1&1&1&-2&0&1&0&1&0&0&0&1&1
\\0&0&0&0&0&0&0&0&0&0&0&1&0&-2&1&0&0&0&0&1&0&0
\\0&0&0&0&0&0&0&0&0&0&0&0&1&1&-2&0&0&1&0&0&0&1
\\0&0&0&0&0&0&0&0&0&0&0&1&0&0&0&-2&1&0&0&0&0&1
\\0&0&0&0&0&0&0&0&0&0&0&0&1&0&0&1&-2&1&1&0&1&0
\\0&0&0&0&0&0&0&0&0&0&1&0&0&0&1&0&1&-2&0&1&0&0
\\1&0&0&0&0&0&0&0&0&1&0&0&0&0&0&0&1&0&-2&0&0&0
\\1&0&0&0&0&0&0&0&0&0&1&0&0&1&0&0&0&1&0&-2&1&1
\\0&0&0&1&0&0&0&0&0&0&0&0&1&0&0&0&1&0&0&1&-2&0
\\0&0&0&0&0&0&1&0&0&0&0&0&1&0&1&1&0&0&0&1&0&-2
\end {array} \right] 
$$
}
\caption{Gram matrix of $\NS(\tilS_4)$ for $q=3$}\label{table:M22char3}
\end{table}
\begin{table}
{\small
$$
\left[ \begin {array}{c@{\hspace{2pt}}c@{\hspace{2pt}}c@{\hspace{2pt}}c@{\hspace{2pt}}c@{\hspace{2pt}}c@{\hspace{2pt}}c@{\hspace{2pt}}c@{\hspace{2pt}}c@{\hspace{2pt}}c@{\hspace{2pt}}c@{\hspace{2pt}}c@{\hspace{2pt}}c@{\hspace{2pt}}c@{\hspace{2pt}}c@{\hspace{2pt}}c@{\hspace{2pt}}c@{\hspace{2pt}}c@{\hspace{2pt}}c@{\hspace{2pt}}c@{\hspace{2pt}}c@{\hspace{2pt}}c@{\hspace{2pt}}}  
-2&0&0&0&0&0&0&0&0&0&0&0
&1&0&0&0&0&0&0&0&0&0\\0&-2&0&0&0&0&0&0&0&0&0&1&0&1&0
&0&0&0&0&0&0&0\\0&0&-2&0&0&0&0&0&0&0&0&0&0&0&1&0&0&0
&0&0&0&0\\0&0&0&-2&0&0&0&0&0&0&0&0&0&0&0&1&0&0&0&0&0
&0\\0&0&0&0&-2&0&0&0&0&0&0&0&0&0&0&0&0&1&0&0&0&0
\\0&0&0&0&0&-2&0&0&0&0&0&0&0&0&0&0&0&0&0&1&0&0
\\0&0&0&0&0&0&-2&0&0&0&0&0&0&0&0&0&0&0&0&0&1&0
\\0&0&0&0&0&0&0&-2&0&0&0&0&0&0&0&0&0&0&0&0&0&1
\\0&0&0&0&0&0&0&0&-2&3&1&1&0&1&1&0&0&1&1&1&0&1
\\0&0&0&0&0&0&0&0&3&-2&0&0&1&0&0&1&1&0&0&0&1&0
\\0&0&0&0&0&0&0&0&1&0&-2&0&0&0&1&1&0&1&0&0&0&1
\\0&1&0&0&0&0&0&0&1&0&0&-2&0&0&1&1&1&1&0&1&1&0
\\1&0&0&0&0&0&0&0&0&1&0&0&-2&0&0&0&1&1&1&1&0&1
\\0&1&0&0&0&0&0&0&1&0&0&0&0&-2&0&1&0&0&1&0&0&0
\\0&0&1&0&0&0&0&0&1&0&1&1&0&0&-2&1&1&0&1&0&1&1
\\0&0&0&1&0&0&0&0&0&1&1&1&0&1&1&-2&1&0&0&1&0&0
\\0&0&0&0&0&0&0&0&0&1&0&1&1&0&1&1&-2&1&1&1&0&0
\\0&0&0&0&1&0&0&0&1&0&1&1&1&0&0&0&1&-2&0&0&0&0
\\0&0&0&0&0&0&0&0&1&0&0&0&1&1&1&0&1&0&-2&0&1&0
\\0&0&0&0&0&1&0&0&1&0&0&1&1&0&0&1&1&0&0&-2&1&1
\\0&0&0&0&0&0&1&0&0&1&0&1&0&0&1&0&0&0&1&1&-2&1
\\0&0&0&0&0&0&0&1&1&0&1&0&1&0&1&0&0&0&0&1&1&-2
\end {array} \right] 
$$
}
\caption{Gram matrix of $\NS(\tilS_2)$ for $q=5$}\label{table:M22char5}
\end{table}
\par
\medskip
The proof for the case  $p=q=5$ and $d=2$ is similar.
We put $\alpha:=\sqrt{2}$ so that $\F_{25}=\F_5(\alpha)$.
In the weighted projective space $\P(3,1,1,1)$
with homogeneous coordinates $[w:x_0:x_1:x_2]$, 
the surface $S_2$ for $p=q=5$ is defined by
$$
w^2=2 (x_0^5 x_1+ x_0 x_1^5)-x_2^6-(x_2^2+x_0 x_1)^3.
$$
The singular locus $\Sing(S_2)$ consists of ten ordinary nodes
$$
Q_{\{a+b\alpha, a-b\alpha\}}\quad (a\in \F_5, b\in \{1,2\})
$$
located over the nodes $\phi([1:a+b\alpha])=\phi([1:a-b\alpha])$
of the branch curve  $B$.
Let $E_{\{a+b\alpha, a-b\alpha\}}$ denote the exceptional $(-2)$-curve in $\tilS_2$
over $Q_{\{a+b\alpha, a-b\alpha\}}$ by the minimal resolution.
As the $22$ curves, 
we choose the following eight exceptional $(-2)$-curves
\begin{eqnarray*}
&&E_{{ \left\{ -\alpha,\alpha \right\} }},\quad
E_{{ \left\{ -2\,\alpha,2\,\alpha \right\} }},\quad
E_{{ \left\{ 1-\alpha,1+\alpha \right\} }}, \quad
E_{{ \left\{ 1-2\,\alpha,1+2\,\alpha \right\} }},\\
&&E_{{ \left\{ 2-\alpha,2+\alpha \right\} }}, \quad
E_{{ \left\{ 3-2\,\alpha,3+2\,\alpha \right\} }}, \quad
E_{{ \left\{ 4-\alpha,4+\alpha \right\} }}, \quad
E_{{ \left\{ 4-2\,\alpha,4+2\,\alpha \right\} }},
\end{eqnarray*}
and the strict transforms of the following $14$ curves on $S_2$: 
\begin{eqnarray*}
&& \{\;\;x_{{1}}\;\;=\;\;w-2\,\alpha {x_{{2}}}^{3}\;\;=\;\;0\;\;\}, \\
&& \{\;\; x_{{1}}\;\;=\;\; w+2\,\alpha {x_{{2}}}^{3}\;\;=\;\;0\;\;\}, \\
&& \{\;\;x_{{0}}+ x_{{1}}+4\,x_{{2}}\;\;=\;\;w+2\,\alpha  \left( 3\,x_{{0}}+x_{{2}} \right) ^{3}\;\;=\;\;0\;\;\},\\
&&\{\;\; 3\,x_{{0}}+x_{{1}}+3\,\alpha x_{{2}}\;\;=\;\;
 w-2\,\alpha {x_{{2}}}^{3}\;\;=\;\;0\;\;\},\\
&&\{\;\;2\,x_{{0}}+ x_{{1}}+4\,\alpha x_{{2}}\;\;=\;\;
 w+2\,\alpha {x_{{2}}}^{3}\;\;=\;\;0\;\;\},\\
&&\{\;\; 3\,x_{{0}}+x_{{1}}+2\,\alpha x_{{2}}+3\,x_{{0}}\;\;=\;\;
 w-2\,\alpha {x_{{2}}}^{3}\;\;=\;\;0\;\;\},\\
&&\{\;\; \left( 3+3\,\alpha  \right) x_{{0}}+x_{{1}}+ \left( 4+\alpha  \right) x_{{2}}\;\;=\;\;
 w+2\,\alpha  \left( 3\,x_{{0}}+x_{{2}} \right) ^{3}\;\;=\;\;0\;\;\},\\
&&\{\;\; \left( 4+\alpha \right) x_{{0}}+ x_{{1}}+ \left( 4+2\,\alpha  \right) x_{{2}}\;\;=\;\;
 w+2\,\alpha  \left( 3\,x_{{0}}+x_{{2}} \right) ^{3}\;\;=\;\;0\;\;\},\\
&&\{\;\; \left( 2+3\,\alpha  \right) x_{{0}}+ x_{{1}}+ \left( 3+3\,\alpha  \right) x_{{2}}\;\;=\;\;
 w-2\,\alpha  \left( x_{{0}}+x_{{2}} \right) ^{3}\;\;=\;\;0\;\;\},\\
&&\{\;\; \left( 1+\alpha \right) x_{{0}}+x_{{1}}+ \left( 3+\alpha  \right) x_{{2}}\;\;=\;\;
 w-2\,\alpha  \left( x_{{0}}+x_{{2}} \right) ^{3}\;\;=\;\;0\;\;\},\\
&&\{\;\; \left( 1+\alpha \right) x_{{0}}+ x_{{1}}+ \left( 2+4\,\alpha  \right) x_{{2}}\;\;=\;\;
 w-2\,\alpha  \left( x_{{2}}+4\,x_{{0}} \right) ^{3}\;\;=\;\;0\;\;\},\\
&&\{\;\; \left( 2+3\,\alpha  \right) x_{{0}}+x_{{1}}+ \left( 2+2\,\alpha  \right) x_{{2}}\;\;=\;\;
 w+2\,\alpha  \left( x_{{2}}+4\,x_{{0}} \right) ^{3}\;\;=\;\;0\;\;\},\\
&&\{\;\; \left( 3+3\,\alpha \right) x_{{0}}+ x_{{1}}+ \left( 1+4\,\alpha  \right) x_{{2}}\;\;=\;\;
 w-2\,\alpha  \left( x_{{2}}+2\,x_{{0}} \right) ^{3}\;\;=\;\;0\;\;\},\\
&&\{\;\; \left( 4+4\,\alpha \right) x_{{0}}+x_{{1}}+ \left( 1+2\,\alpha  \right) x_{{2}}\;\;=\;\;
 w-2\,\alpha  \left( x_{{2}}+2\,x_{{0}} \right) ^{3}\;\;=\;\;0\;\;\}.
\end{eqnarray*}
Their intersection matrix is given in Table~\ref{table:M22char5}.
It is of determinant $-25$.
Therefore the Artin invariant of $\tilS_2$ is $1$.
\end{proof}

\begin{remark}
In the case $q=5$,
the Ballico-Hefez curve $B$ is one of the  sextic plane curves studied classically by Coble~\cite{MR1506391}.
\end{remark}

\bibliographystyle{plain}

\begin{thebibliography}{1}

\bibitem{MR0146182}
{\scshape M.~Artin},
\newblock Some numerical criteria for contractability of curves on algebraic
  surfaces,
\newblock {Amer. J. Math.}  {\bf 84} (1962), 485--496.

\bibitem{MR0199191}
{\scshape  M.~Artin},
\newblock On isolated rational singularities of surfaces,
\newblock {Amer. J. Math.}  {\bf 88} (1966), 129--136.

\bibitem{MR0371899}
{\scshape M.~Artin},
\newblock Supersingular {$K3$} surfaces,
\newblock {Ann. Sci. \'Ecole Norm. Sup.}  {\bf  7} (1974), 543--567.


\bibitem{MR1092144}
{\scshape E.~Ballico and A.~Hefez},
\newblock Nonreflexive projective curves of low degree,
\newblock {Manuscripta Math.}  {\bf  70} (1991), 385--396.

\bibitem{MR1506391}
{\scshape A.~B. Coble},
\newblock The ten nodes of the rational sextic and of the {C}ayley
  symmetroid,
\newblock {Amer. J. Math.}  {\bf  41} (1919), 243--265.


\bibitem{Fukasawa1}
{\scshape S.~Fukasawa},
\newblock Complete determination of the number of {G}alois points for a smooth
  plane curve,
\newblock {to appear in Rend. Semin. Mat. Univ. Padova}.

\bibitem{Fukasawa2}
{\scshape S.~Fukasawa},
\newblock {G}alois points for a non-reflexive plane curve of low degree,
\newblock {Finite Fields. Appl.}  {\bf  23} (2013), 69-79.

\bibitem{MR2961398}
{\scshape S.~Fukasawa, M.~Homma and S.~J.~Kim},
\newblock Rational curves with many rational points over a finite field,
\newblock {Arithmetic, geometry, cryptography and coding theory}, 
{Contemp. Math. {\bf 574}},  Amer. Math. Soc., Providence, RI,
  2012, 37--48.
  
 \bibitem{MR717305}
{\scshape G.~J. Griffith},
\newblock The ``{C}oxeter curves'', {$x^{2/p}+y^{2/p}+z^{2/p}=0$} for odd
  values of {$p$},
\newblock {J. Geom.}  {\bf 20} (1983), 111--115.


\bibitem{MR2862188}
{\scshape T.~Katsura and S.~Kond{\=o}},
\newblock Rational curves on the supersingular {K}3 surface with {A}rtin
  invariant 1 in characteristic 3,
\newblock {J. Algebra}  {\bf 352} (2012), 299--321.

\bibitem{KKSchar5}
{\scshape T.~Katsura, S.~Kond{\=o} and I.~Shimada},
\newblock On the supersingular $K3$ surface in characteristic 5 with {A}rtin
  invariant 1, 
\newblock {preprint, arXiv:1312.0687.}


\bibitem{KondoShimada}
{\scshape S.~Kond{\=o} and I.~Shimada},
\newblock The automorphism group of a supersingular {$K3$} surface with {A}rtin
  invariant $1$ in characteristic $3$, 
\newblock {to appear in Int. Math. Res. Not. doi:10.1093/imrn/rns274.}

\bibitem{MR563467}
{\scshape A.~Ogus},
\newblock Supersingular {$K3$} crystals,
\newblock  {Journ\'ees de G\'eom\'etrie Alg\'ebrique de Rennes (Rennes,
  1978), Vol. II},  {Ast\'erisque {\bf 64}}, Soc. Math.
  France, Paris, 1979, 3--86.

\bibitem{MR717616}
{\scshape A.~Ogus},
\newblock A crystalline {T}orelli theorem for supersingular {$K3$} surfaces,
\newblock {Arithmetic and geometry, Vol. II}, {Progr.
  Math. {\bf 36}}, Birkh\"auser Boston, Boston, MA, 1983, 361--394.

\bibitem{MR633161}
{\scshape A.~N. Rudakov and I.~R. Shafarevich},
\newblock Surfaces of type {$K3$} over fields of finite characteristic,
\newblock {Current problems in mathematics {\bf 18}}, 
  Akad. Nauk SSSR, Vsesoyuz. Inst. Nauchn. i Tekhn. Informatsii, Moscow, 1981, 115--207.
\newblock Reprinted in I. R. Shafarevich, Collected Mathematical Papers,
  Springer-Verlag, Berlin, 1989, pp. 657--714.


\bibitem{MR1176080}
{\scshape I.~Shimada},
\newblock Unirationality of certain complete intersections in positive
  characteristics,
\newblock {Tohoku Math. J.}  {\bf 44} (1992), 379--393.

\bibitem{shimadapreprintchar5}
{\scshape I.~Shimada},
\newblock Projective models of the supersingular {$K3$} surface with {A}rtin
  invariant $1$ in characteristic $5$, 
\newblock {to appear in J. Algebra. doi:10.1016/j.jalgebra.2013.12.029.}


\bibitem{MR0374149}
{\scshape T.~Shioda},
\newblock An example of unirational surfaces in characteristic {$p$},
\newblock {Math. Ann.}  {\bf 211} (1974), 233--236.

\bibitem{MR526513}
{\scshape T.~Shioda and T.~Katsura},
\newblock On {F}ermat varieties,
\newblock {Tohoku Math. J.} {\bf 31} (1979), 97--115.

\end{thebibliography}

\def\cftil#1{\ifmmode\setbox7\hbox{$\accent"5E#1$}\else
  \setbox7\hbox{\accent"5E#1}\penalty 10000\relax\fi\raise 1\ht7
  \hbox{\lower1.15ex\hbox to 1\wd7{\hss\accent"7E\hss}}\penalty 10000
  \hskip-1\wd7\penalty 10000\box7} \def\cprime{$'$} \def\cprime{$'$}
  \def\cprime{$'$} \def\cprime{$'$}

\end{document}